\definecolor{red}{rgb}{1.0,0,0}
\newcommand{\Go}[1]{{G\"{o}del} }
\renewcommand{\to}{\mathop{\rightarrow}}
\newtheorem{theorem}{Theorem}
\newtheorem{lemma}{Lemma}
\newtheorem{definition}{Definition}
\newtheorem{example}{Example}
\newtheorem{remark}{Remark}
\begin{document}

\date{ }
\title{Epistemic BL-Algebras}
\author{Manuela Busaniche\inst{1} \and Pen\'elope Cordero\inst{2} \and Ricardo Oscar Rodriguez \inst{3}}

\authorrunning{Busaniche, Cordero, Rodriguez} 
\titlerunning{Epistemic MV-algebras}
\institute{IMAL, CONICET-UNL. FIQ,UNL.
              {Santa Fe, Argentina.}\\
              \email{mbusaniche@santafe-conicet.gov.ar}           %
           \and
              IMAL, CONICET-UNL.
              {Santa Fe, Argentina.}\\
              \email{pcordero@santafe-conicet.gov.ar}           \\
           \and
           {UBA. FCEyN. Departamento de Computaci\'on.} \\
           {CONICET-UBA. Inst. de Invest. en Cs. de la Computaci\'on.}\\
           Buenos Aires, Argentina.\\
           \email{ricardo@dc.uba.ar}
}

\maketitle

\begin{abstract}
Fuzzy Epistemic Logic is an important formalism for approximate reasoning. It extends the well known basic propositional logic BL, introduced by H\'{a}jek, by offering the ability to reason about possibility and necessity of fuzzy propositions. We consider an algebraic approach to study this logic, introducing Epistemic BL-algebras. These algebras turn to be a generalization of both, Pseudomonadic Algebras introduced by \cite{Bez2002} and serial, euclidean and transitive Bi-modal G\"odel Algebras proposed by \cite{CaiRod2015}. We present the connection between this class of algebras and fuzzy possibilistic frames, as a first step to solve an open problem proposed by H\'{a}jek \cite[chap. ~8]{HajekBook98}.

\end{abstract}

\section{Introduction}
\label{intro}
 For many years classical logic has provided a formal basis to study human reasoning. However, human practical reasoning demands more than what traditional classical logic can offer. For instance, classically, the truth of a statement $q$ with respect to a state of knowledge {\cal K} is determined if every model of {\cal K} is also model of $q$. But nothing can be said about its truth value if only
the {\it most possible} models of {\cal K} are also models of $q$. The scene becomes more complicated if it is necessary to accept that the statement $q$ can also take an intermediate truth-value different from true and false. When we need to deal simultaneously with both fuzziness and modalities, a fuzzy version of epistemic logic should be an useful tool.

In the present paper, we want to characterize a fuzzy version of the classical epistemic logic $KD45$.
The usual semantics of epistemic logic is a Kripke-style semantics. This is why in \cite{HajekBook98} a Kripke semantics for a system of fuzzy epistemic logic is proposed. Unfortunately, it is not immediate to find an axiomatization of the corresponding logic starting from this semantics because the $K$ axiom is not valid. We attack the problem in a novel way, by proposing a possible algebraic semantic, which is obtained by extending BL-algebras (the algebraic models of basic logic) by two operators that model necessity and possibility. It turns out that the only minimal logics axiomatized in the literature are the ones where the base many-valued logic is the one corresponding to a finite Heyting algebra \cite{Fitting1,Fitting2}; the standard (infinite) G\"odel algebra \cite{CaiRod2015} or a finite residuated algebra \cite{BoEsGoRo11} (in particular finite {\L}ukasiewicz linearly ordered algebras).

To achieve our aim, we introduce a generalization of Monadic BL-algebras proposed by \cite{CCDR2017} which we call Epistemic BL-algebras (EBL-algebras). This generalization resembles what is done with monadic Boolean algebras and Pseudomonadic algebras in \cite{Bez2002}. In fact, we prove that the class of EBL-algebras whose BL-reduct is Boolean coincides with the class of Pseudomonadic algebras, indicating that Boolean epistemic BL-algebras are the algebraic counterpart of classical epistemic logic $KD45.$ This situation also happens for G\"{o}del EBL-algebras, since we prove that they are exactly serial, euclidean and transitive Bi-modal Algebras proposed by \cite{CaiRod2015}.
  We also introduce  a special class of epistemic BL-algebras, which we call c-EBL-algebras, that will become important to establish a connection with fuzzy possibilistic frames. We prove that a c-EBL-algebra ${\bf A}$ is completely determined by a pair formed by a subalgebra of ${\bf A}$ and an element of ${\bf A}.$  With these results, we recall the notion of possibilistic BL-frame of \cite{HajekBook98}, since its associated logic is the one that we want to characterize. We prove that each possibilistic BL-frame determines a unique c-EBL-algebra and we give necessary conditions for a BL-algebra of functions to be the reduct of a c-EBL-algebra corresponding to a possibilistic frame.  Finally, we conclude the paper describing some conclusions and future challenges.

Throughout this paper we assume that the reader is acquainted with basic notions concerning BL-algebras.

\section{Preliminaries}
\label{sec:0}

A \textit{residuated lattice} (\textit{commutative} and \textit{integral}) \cite{GJKO} is an algebra $${\bf A}=\langle A,\wedge, \vee, \ast,\to ,1 \rangle$$
of type $(2,2,2,2,0)$ such that $\langle A, \ast , 1\rangle,$ is a commutative monoid,
${\bf L(A)}:=\langle A, \wedge, \vee, 1\rangle $ is an upper bounded lattice and the following residuation condition holds:
\begin{equation}\label{eq:residuation}a\ast b\le c \ \ \ \mbox{ iff } \ \ \ a\le b \to c,\end{equation} where $\le$ is the order given by the lattice structure.
A \textit{bounded residuated lattice} ${\bf A}=\langle A,\wedge, \vee, \ast,\to ,0 ,1\rangle$
is an algebra that satisfies that the reduct $\langle A,\wedge, \vee, \ast,\to ,1 \rangle$ is a residuated lattice and the new constant $0$ is the lower bound of ${\bf L(A)}$. A \textit{BL-algebra} is a prelinear and divisible bounded residuated lattice, that is, a bounded residuated lattice that satisfies the equations
\begin{equation}\label{eq:prelinearity}
	a\wedge b=a\ast (a\to b),
\end{equation}
\begin{equation}\label{eq:divisibility}
	(a\to b)\vee (b\to a)=1.
\end{equation}
For details about BL-algebras \cite{HajekBook98, BuMon}. Some immediate consequences of the definition, that will be frequently used are:
\begin{equation}\label{eq:orden}
a\leq b\mbox{ iff } a\to b=1,
\end{equation}
\begin{equation}\label{eq:3}
a\to (b\to c)=(a\ast b)\to c=b\to (a\to c),
\end{equation}
\begin{equation}\label{eq:4}
a\to(b\wedge c)=(a\to b)\wedge (a\to c).
\end{equation}
Besides, for any BL-algebra ${\bf A}$ a unary operation of negation can be defined by the prescription $$\neg x:=x\rightarrow 0.$$
When the natural order is total ${\bf A}$ is called a {\it
	BL-chain}. As usual, a BL-algebra is called {\it complete} if for any subset $S\subseteq A$ the infimum and the supremum of $S$ exist.

BL-algebras form a variety, called $\mathbb{BL}$, which has some important subvarieties that has been studied for their own importance, since they are
the algebraic counterpart of some well known logics. We name some of them that will be used in the results of the present paper:
\begin{itemize}
	\item  \textit{MV-algebras}, the algebras of {\L}ukasiewicz infinite-valued logic, form the subvariety of $\mathbb{BL}$ characterized by the equation $\neg \neg a = a$. Along this paper, for each natural number $n$, $\mbox{\L}_n$ will denote the unique (up to isomorphism) $n$-elements MV-chain \cite{CDM}. We will see these algebras as subalgebras of the standard MV-chain $[0,1]_{\bf MV}$.
	\item  \textit{G\"odel algebras} (or prelinear Heyting algebras) are the algebraic counterpart of a superintuitionistic logic, called G\"odel Logic. The variety of G\"{o}del algebras is the subvariety $\mathbb{BL}$ that satisfy the equation $a \ast b = a \wedge b$, or equivalently $a^2:=a\ast a=a$.
	\item  \textit{Boolean algebras} are  BL-algebras that satisfy $a \wedge \neg a =0$ and $a\vee \neg a=1.$
	\item  \textit{Product algebras} form a subvariety of $\mathbb{BL}$ characterized by the following two equations: $\neg \neg c \to ((a \ast c \to b \ast c) \to (a \to b))$ and $ a \wedge \neg a = 0$. The standard product chain $[0,1]_{\bf \Pi}$ is a product algebra over the real interval $[0,1]$ interpreting $\ast$ as the standard product, $\to$ as the residuum of the product and the order is the natural order of the real unit interval. We will use the following fact \cite[Proposition 2.1]{CT}: if ${\bf A}$ is product chain, then $A\setminus \{0\}$ is a cancellative monoid, i.e., it satisfies the equation: \begin{equation}\label{eq:producto} a\to (a\ast b)=b. \end{equation}
\end{itemize}

\medskip

We recall some facts about BL-chains that will be useful to understand examples and technical lemmas along the paper.
Every BL-chain ${\bf A}$ is isomorphic to an ordinal sum indexed by a totally ordered set $I$ of algebras ${\bf A}_i$, $i \in I$  such that each ${\bf A}_i$ is either a bottom free reduct of an MV-chain or a cancellative and divisible totally ordered residuated lattice \cite[Corollary 3.2.9]{BuMon}. In symbols  $${\bf A}\cong\bigoplus_{i\in I}{\bf A}_i.$$
To define the ordinal sum we require $A_i\cap A_j=\{1\}$ for each $i\neq j\in I$ and if $\ast_i,\to_i$ are the operations in $A_i$ then the operations $\ast, \to$ in $\bigoplus_{i\in I} A_i$ satisfies \begin{align*}
		a\ast b &=\begin{cases}
				a\ast_i b & \text{if } a,b\in W_i; \\
				a          & \text{if } a\in W_i\setminus\lbrace 1\rbrace, b\in W_j \text{ and }i<j; \\
				b          & \text{if } b\in W_i\setminus\lbrace\top\rbrace, a\in W_j \text{ and }i<j.
			\end{cases}\\
		a\to b &=\begin{cases}
				1         & \text{if } a\in W_i\setminus\lbrace 1\rbrace, b\in W_j \text{ and }i<j; \\
				a\to_i b  & \text{if } a,b\in W_i; \\
				b         & \text{if } b\in W_i, a\in W_j \text{ and }i<j.
		\end{cases}
	\end{align*}
The order of $\bigoplus_{i\in I}{\bf A}_i$  can be recovered using the definition of $\to$ and equation (\ref{eq:orden}).

\section{Epistemic BL-algebras}
\label{sec:1}

To compare our algebras with the one in \cite{CCDR2017} and \cite{Bez2002}, the notation that we will adopt for the modal operators $\Box$ and $\Diamond$ is $\forall$ and $\exists$ respectively.

\begin{definition}\label{def epist alg}
	An algebra ${\bf A} = \langle A,\vee, \wedge, \ast, \to, \forall , \exists ,  0,  1 \rangle$ of type $(2, 2, 2, 2, 1, 1,  0, 0)$ is called an
	\textit{Epistemic BL-algebra} (an \textsc{EBL}-algebra for short) if $\langle A, \vee, \wedge, \ast, \to, 0, 1 \rangle$ is a \textsc{BL}-algebra that also satisfies:
		\begin{description}
			\item[(E$\forall$)] $\forall 1 =1$,
			\item[(E$\exists$)] $\exists 0 = 0$,
			\item[(E1)] $\forall a \to \exists a =1$,
			\item[(E2)] $\forall (a \to \forall b )= \exists a \to \forall b$,
			\item[(E3)] $\forall(\forall a \to b)=\forall a \to \forall b$,
			\item[(E4)] $\exists a \to \forall \exists a =1$,
			\item[(E4a)] $\forall(a \wedge b)=\forall a \wedge \forall b$,
			\item[(E4b)] $\exists (a \vee b)=\exists a \vee \exists b$,
			\item[(E5)] $\exists(a \ast \exists b)=\exists a \ast \exists b$.
		\end{description}
	
\end{definition}
Epistemic BL-algebras form a variety that we will denote by $\mathbb{EBL}$, and for simplicity, if $\mathbf{A}$ is a BL-algebra and we enrich it with an epistemic structure, we denote the resulting algebra by $\langle \mathbf{A},\forall , \exists \rangle$.
\medskip

In \cite{CCDR2017}, the authors  introduce the variety of monadic BL-algebras ($\mathbb{MBL}$) as BL-algebras endowed with two monadic operators $\forall$ and $\exists$ satisfying the equations: \begin{description}
		\item[(M1)] $\forall a \to a = 1$,
		\item[(M2)] $\forall(a \to \forall b)= \exists a \to \forall b$,
		\item[(M3)] $\forall(\forall a \to b)=\forall a \to \forall b$,
		\item[(M4)] $\forall (\exists a \vee b)=\exists a \vee \forall b$,
		\item[(M5)] $\exists( a \ast a)= \exists a \ast \exists a$.
	\end{description} They show that this class is the equivalent algebraic semantics of the monadic fragment of H\'{a}jek's basic predicate logic. The reader can corroborate that every monadic BL-algebra is an EBL-algebra, thus $\mathbb{MBL}$ is a subvariety of $\mathbb{EBL}.$ But equations M1,  M4 and M5 does not hold in any EBL-algebra. We present an example of an algebra in $\mathbb{EBL}$ which is not monadic.

\begin{example} \label{ej 1  epis no mon}
	Consider the finite MV-chain  $\mbox{\L}_4=\left \lbrace 0, \frac{1}{3}, \frac{2}{3},1 \right \rbrace$. If we define the operators $\forall$ and $\exists$ in the following way,
	
	\begin{center}	{
			\begin{tabular}{|c|c c c c|}
				\hline
				$a$        & $0$ & $\frac{1}{3}$ & $\frac{2}{3}$ & $1$ \\ \hline
				$\forall a$ & $0$ & $0$           & $1$           & $1$ \\ \hline
				$\exists a$ & $0$ & $0$           & $1$           & $1$ \\  \hline
		\end{tabular}}
	\end{center}
	then, $\langle \mbox{\L}_4, \forall, \exists \rangle$ is an epistemic BL-chain. However, this algebra does not belong to $\mathbb{MBL}$, since condition M1 is not satisfied: for $a=\frac{2}{3}$, $\forall a \not\leq a $.
\end{example}
In the next lemma we will study some properties that hold true in any EBL-algebra.

\begin{lemma}\label{propiedades}
	Let $\mathbf{A} \in \mathbb{EBL}$ and $a, b \in A$: \vspace{-.2cm}
		\begin{description}
			\item[(E6)]$\forall 0=0$,
			\item[(E7)]$\exists 1=1$,
			\item[(E8)]$\forall \forall a =\forall a$,
			\item[(E9)]$\exists \forall a=\forall a$,
			\item[(E10)]$\exists \exists a = \exists a$,
			\item[(E11)]$\forall \exists a= \exists a$,
			\item[(E12)]$\exists(\exists a \vee \exists b)=\exists a \vee \exists b$,
			\item[(E13)]$\exists(\exists a \ast \exists b)=\exists a \ast \exists b$,
			\item[(E14)]$\forall (\exists a \to b)=\exists a \to \forall b$,
			\item[(E15)]$\exists(\exists a \to \exists b)=\exists a \to \exists b$,
			\item[(E16)]$\exists(\exists a \wedge \exists b)=\exists a \wedge \exists b$,
			\item[(E17)]$\forall \neg a=\neg \exists a$,
			\item[(E18)]$\forall(\forall a \to a)=1$,
			\item[(E19)]$\forall( a \to \exists a)=1$,
			\item[(M$\forall$)] If $a \to b=1$ then $\forall a \to \forall b=1$,
			\item[(M$\exists$)] If $a \to b=1$ then $\exists a \to \exists b=1$.
		\end{description}
\end{lemma}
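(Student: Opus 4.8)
The plan is to derive everything from the axioms (E$\forall$)--(E5) by well-chosen substitutions, organising the work so that the idempotency and ``closed element'' facts come first and the rest follow mechanically. First I would dispose of monotonicity: since $a\le b$ makes $a\wedge b=a$ and $a\vee b=b$, axiom (E4a) gives $\forall a=\forall(a\wedge b)=\forall a\wedge\forall b$, hence $\forall a\le\forall b$, which is (M$\forall$); dually (E4b) yields (M$\exists$). The two boundary values are equally quick: by (E1) we have $\forall 0\le\exists 0=0$ using (E$\exists$), so $\forall 0=0$ (E6), and $1=\forall 1\le\exists 1\le 1$ using (E$\forall$) and (E1), so $\exists 1=1$ (E7). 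Idempotency of the operators I would get by feeding the unit into (E2) and (E5): putting $a=1$ in (E2) and using $1\to x=x$ together with $\exists 1=1$ collapses it to $\forall\forall b=\forall b$ (E8), and the same substitution in (E5) (using $1\ast x=x$) gives $\exists\exists b=\exists b$ (E10).

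Next I would set up the ``closed element'' picture, which is the heart of the argument. From (E1) applied to $\exists a$ we get $\forall\exists a\le\exists\exists a=\exists a$ by (E10), and (E4) supplies the reverse inequality, so $\forall\exists a=\exists a$ (E11): every value $\exists a$ is a fixed point of $\forall$. The symmetric statement (E9), that every value $\forall a$ is a fixed point of $\exists$, is the step I expect to be the main obstacle, because one inequality is not purely formal. The direction $\forall a\le\exists\forall a$ follows from (E1) and (E8), but for $\exists\forall a\le\forall a$ I would substitute $a:=\forall c$, $b:=c$ into (E2): the left-hand side becomes $\forall(\forall c\to\forall c)=\forall 1=1$, so the right-hand side $\exists\forall c\to\forall c$ equals $1$, i.e. $\exists\forall c\le\forall c$ by (\ref{eq:orden}). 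Finding this substitution is the one genuinely non-routine move. Once (E9) is available I would record the slogan that closes out the list: any element $d$ with $\forall d=d$ satisfies $\exists d=\exists\forall d=\forall d=d$, so every $\forall$-fixed element is automatically $\exists$-fixed.

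The remaining identities are then harvested by substitution. (E12) and (E13) come from (E4b) and (E5) by plugging in closed elements and simplifying with (E10). For (E14) I would rewrite $\exists a=\forall\exists a$ via (E11) so that $\exists a$ is literally a $\forall$-image, and then (E3) reads $\forall(\exists a\to b)=\exists a\to\forall b$. This immediately makes $\exists a\to\exists b$ a $\forall$-fixed element (take $b:=\exists b$ and use (E11)), and $\exists a\wedge\exists b$ is $\forall$-fixed by (E4a) and (E11); applying the slogan gives (E15) and (E16). Finally (E17) is (E2) with $b:=0$ together with $\forall 0=0$ (E6), (E18) is (E3) with $b:=a$ since $\forall a\to\forall a=1$, and (E19) is (E2) with $b:=\exists a$, whose right-hand side is $\exists a\to\forall\exists a=\exists a\to\exists a=1$ by (E11). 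Thus every clause reduces either to a one-line substitution or to the closure slogan, the only real work being the reverse inequality in (E9).
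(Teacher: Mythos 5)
Your proof is correct, and for most items it follows the same substitutions the paper uses (the reverse inequality in (E9) via $a:=\forall c$, $b:=c$ in (E2) is exactly the paper's move inside its proof of (E8), so it is less of an obstacle than you feared; your derivation of (E8) itself, by setting $a=1$ in (E2) and using (E7), is a slightly slicker variant of the paper's two-sided argument with (E2) and (E3)). The genuine divergence is in (E15) and (E16). There you isolate the principle that any $\forall$-fixed element is $\exists$-fixed (an immediate consequence of (E9)), show that $\exists a\to\exists b$ is $\forall$-fixed via (E14) with $b:=\exists b$ and (E11), and that $\exists a\wedge\exists b$ is $\forall$-fixed via (E4a) and (E11), and then just apply the slogan. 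The paper instead proves (E15) by a two-sided computation whose harder direction runs through (E5) and residuation, and proves (E16) by invoking divisibility to rewrite $\exists a\wedge\exists b$ as $\exists a\ast(\exists a\to\exists b)$ and then chaining (E15) and (E13). Your route buys uniformity and avoids both the residuation juggling and the appeal to divisibility; the paper's route makes (E16) an explicit corollary of (E15) and (E13), which foreshadows the role these product identities play later (e.g.\ in Theorem \ref{subalgebra}). Either way the lemma is fully established; the only cosmetic point is that in (E19) you should note explicitly that (E11) is also used to rewrite the left-hand side $\forall(a\to\forall\exists a)$ as $\forall(a\to\exists a)$, as the paper does.
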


\begin{proof} 
	It is worth mentioning that some ideas of the following proofs are borrowed from \cite{CCDR2017}.
	\begin{description}
				\item[{(E6)}, {(E7)}] follow trivially from  {E1}, E$\forall $ and E$\exists $.
		\item [{(E8)}] From {E1} we get $\forall  \forall a\to \exists  \forall a= 1$. By {E$\forall $} and {E2}, we have $1 = \forall (\forall a\to \forall  a) = \exists  \forall a\to \forall  a$. Hence $\forall  \forall a \to \forall  a=1$. For the other direction consider {E$\forall $} and {E3}. Then  $1=\forall(\forall a\to \forall a)=\forall a \to \forall\forall a$.
		\item[{(E9)}] It is immediate from  the previous proof {E8}.
		\item[{(E10)}] Using {E5} we get  $\exists  \exists a= \exists  (1 \ast \exists  a) = \exists  1 \ast \exists  a$. By {E7} we obtain the desired result.
		\item[{(E11)}]  From {E4}, one has $\exists a\leq \forall \exists  a$. On the other hand, by {E1} and {E10}, we obtain $\forall  \exists a\leq \exists  \exists a= \exists  a$.
		\item [{(E12)}] It is an immediate consequence of {E4b}  and {E10}.
		\item[{(E13)}] It is an immediate consequence of  {E5}  and {E10}.
		\item[{(E14)}] From {E11} and {E3} we have $\forall (\exists a\to b) = \forall (\forall \exists a\to b) =   \forall \exists a\to \forall b= \exists a\to \forall  b$.
		\item[{(E15)}] Clearly, $\exists a \to \exists b= \exists a\to \forall  \exists b= \forall (\exists a\to \exists  b) \leq \exists  (\exists a\to \exists  b) $ due to {E1}, { E14} and {E1}, respectively. On the other hand, using {E11}, {E2} and {E$\forall $}, $\exists (\exists a\wedge \exists  b) \to \exists b= \exists (\exists a\wedge \exists  b) \to \forall  \exists b= \forall ((\exists a\wedge \exists  b)\to \forall  \exists  b)= \forall ((\exists a\wedge \exists  b)\to \exists  b)= \forall  1 = 1$. Thus $\exists (\exists a \wedge \exists  b)\leq \exists  b$, i.e., $\exists (\exists a\ast (\exists a\to \exists  b))\leq \exists  b$ and by {E5} $\exists (\exists a\to \exists  b) \ast \exists a\leq \exists  b$  then by residuation $\exists (\exists a\to \exists  b) \leq \exists a\to \exists  b$.
		\item[{(E16)}] By divisibility (equation (\ref{eq:divisibility})), we have $\exists  (\exists a\wedge \exists  b) = \exists (\exists a\ast (\exists a\to \exists  b))$. According to {E15}, the right side of the last identity is equivalent to $\exists (\exists a\ast \exists (\exists a\to \exists  b))$ which, due to {E13}, is equal to $\exists a\ast \exists (\exists a\to \exists  b) =\exists a\ast (\exists a\to \exists  b) = \exists a\wedge \exists  b$.
		\item[{(E17)}] Taking into account E6 and E2, $\forall \neg a=\forall (a \to 0)=\forall(a \to \forall 0)=\exists a\to \forall 0=\exists a \to 0=\neg \exists a$.
		\item[{(E18)}] Using E3, $\forall(\forall a \to a)=\forall a \to \forall a=1$.
		\item[{(E19)}] By E11, $\forall( a \to \exists a)=\forall(a \to \forall \exists a)$, then by E2, $\forall(a \to \forall \exists a)=\exists a \to \forall \exists a=\exists a \to \exists a=1$.
		\item[(M$\forall $), (M$\exists $)] They are immediate consequences of {E4a} and {E4b}, respectively.
		
	\end{description}
	
\end{proof}

\medskip
\begin{theorem} \label{subalgebra}
	If $\mathbf{A} \in \mathbb{EBL}$, then $\forall  A = \exists  A$. Moreover,  $\exists A$ is closed under the operations of ${\bf A}$, thus $\exists {\bf A}$ is a subalgebra of $\mathbf{A}$.
\end{theorem}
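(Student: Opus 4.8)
The plan is to identify the images $\forall A$ and $\exists A$ with sets of fixpoints, and then read every closure property off the identities already collected in Lemma~\ref{propiedades}. First I would establish $\forall A = \exists A$. Every element of $\forall A$ has the form $\forall a$, and by E9 we have $\forall a = \exists \forall a$, so $\forall a \in \exists A$; this gives $\forall A \subseteq \exists A$. Symmetrically, every element of $\exists A$ is $\exists a = \forall \exists a$ by E11, so $\exists a \in \forall A$ and $\exists A \subseteq \forall A$. Hence the two images coincide.

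The key step is the reformulation $\exists A = \{\, b \in A : \exists b = b \,\}$. The inclusion $\supseteq$ is immediate, while $\subseteq$ follows from the idempotency E10, since $\exists(\exists a) = \exists a$ shows that every $\exists a$ is a fixpoint of $\exists$ (dually $\forall A = \{\, b : \forall b = b \,\}$ via E8). This fixpoint description is what makes the rest routine, because it converts the a priori awkward task ``the image of $\exists$ is closed'' into ``the set of $\exists$-fixpoints is closed'', for which the stability identities of the Lemma apply verbatim.

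Finally I would verify closure under each operation of $\mathbf{A}$ on the fixpoint set. For the constants, $0 \in \exists A$ by E$\exists$ and $1 \in \exists A$ by E7. Given $x = \exists a$ and $y = \exists b$ in $\exists A$, the identities E12, E16, E13 and E15 state exactly that $x \vee y$, $x \wedge y$, $x \ast y$ and $x \to y$ are fixed by $\exists$, hence belong to $\exists A$. Closure under the modal operators themselves is even simpler: E10 gives $\exists x = x$ and E11 gives $\forall x = x$, so both $\forall$ and $\exists$ map $\exists A$ into itself. Collecting these, $\exists A$ is a subuniverse, and thus $\exists \mathbf{A}$ is a subalgebra of $\mathbf{A}$.

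I expect no genuine obstacle beyond this: all the real content is concentrated in Lemma~\ref{propiedades}, whose equations E12--E16 were designed precisely so that the common image $\forall A = \exists A$ is stable under the lattice, monoidal and residual operations. The one place deserving care is making the fixpoint identification explicit, since without it one would be tempted to argue through arbitrary preimages $a, b$ rather than through the canonical witnesses $x = \exists a$ and $y = \exists b$, which is exactly the form in which E12--E16 are stated.
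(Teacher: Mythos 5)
Your proof is correct and follows essentially the same route as the paper: both establish $\forall A=\exists A$ from E9 and E11, and both obtain closure of $\exists A$ from E$\exists$, E7, E10, E12, E13, E15 and E16. Your explicit fixpoint reformulation $\exists A=\{b: \exists b=b\}$ is a harmless (and slightly cleaner) packaging of what the paper's one-line citation of those identities already implicitly uses.
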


\begin{proof}
	Observe that from {E9} and  {E11} we have  $\forall  A =\{ \forall  a : a \in A \} = \{\exists  a : a \in A\} = \exists  A$. On the other hand, from {E$\exists$}, {E7}, {E10},{ E12}, {E13}, {E15}, and {E16}, we obtain that $\exists  \mathbf{A}$ is a subalgebra of $\mathbf{A}$.
\end{proof}

\begin{remark}
Observe that (E8) and (E10) imply that $\forall$ and $\exists$ are idempotent operators, and therefore, the identity when they are restricted to the subalgebra $\exists {\bf A}.$  (M$\forall$) and (M$\exists$) show that both operators are monotone. However, $\exists$ is not a closure operator, since  $a\le \exists a$ does not hold in every EBL-algebra, as Example \ref{ej 1  epis no mon} shows. Similarly $\forall$ is not an interior operator. \end{remark}

An \textit{implicative filter} $F$ of a BL-algebra ${\bf A}$ is a subset $F\subseteq A$ such that $1\in F$ and if $x$ and $x\rightarrow y$ are in $F$ then $y\in F.$ $F$ is also upwards closed, non-empty and closed under $\ast.$ Each filter $F$ of a BL-algebra ${\bf A}$ determines a congruence $\equiv_F$ given by $a\equiv_F b$ iff $a\rightarrow b\in F$ and $b\rightarrow a \in F.$ Moreover, the map $F\mapsto \equiv_F$ is an order isomorphism between the lattice $\mathcal{F}$ of filters of a BL-algebra ${\bf A}$ and the lattice of congruences of ${\bf A}$ \cite{HajekBook98, BuMon}. We will generalize the notion of filters for our new structures.

\begin{definition}\label{def filtros epist}
	A subset $F$ of a \textsc{EBL}-algebra $\mathbf{A}$ is an \textit{epistemic \textsc{BL}-filter} if $F$ is an implicative filter the BL-reduct of ${\bf A}$ and  if $a \to b \in F$ then $\forall  a \to \forall  b \in F$ and $\exists  a \to \exists  b \in F$.
\end{definition}

\begin{theorem} \label{EBL filtros congruencias}
	Let $F$ be an epistemic \textsc{BL}-filter of an \textsc{EBL}-algebra $\mathbf{A}$. Then the binary relation $\equiv_F$ on $A$ defined by $a \equiv_F b$ if and only if $a \to b \in F$ and $b \to a \in F$ is a congruence relation. Moreover, $F = \{ a \in A :  a \equiv_F 1 \}$. Conversely, if $\equiv$ is a congruence on  $A$, then $F_\equiv =\{ a \in A :  a \equiv 1 \}$ is an epistemic  \textsc{BL}-filter, and $a \equiv b$ if and only if $a \to b \equiv 1$ and $b \to a \equiv 1$. Therefore, the correspondence $F \mapsto \equiv_F$ is a bijection from the set of epistemic  \textsc{BL}-filters of $\mathbf{A}$ onto the set of congruences on $\mathbf{A}$.
\end{theorem}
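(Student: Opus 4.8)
The plan is to reduce everything to the classical correspondence between implicative filters and congruences for the BL-reduct, recalled before Definition \ref{def filtros epist} from \cite{HajekBook98, BuMon}, and then to add only the compatibility with the two new operators $\forall$ and $\exists$. Since an epistemic BL-filter is in particular an implicative filter of the BL-reduct of $\mathbf{A}$, the relation $\equiv_F$ is already a congruence for $\wedge, \vee, \ast, \to$, and the identity $F = \{a \in A : a \equiv_F 1\}$ holds because $a \equiv_F 1$ amounts to $a \to 1 \in F$ (always true, as $a\to 1=1$) together with $1 \to a = a \in F$. Likewise, for any congruence $\equiv$, the set $F_\equiv$ is an implicative filter of the BL-reduct and $a \equiv b$ iff $a \to b \equiv 1$ and $b \to a \equiv 1$, again by the classical theory. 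Thus the only genuinely new points are: (i) that $\equiv_F$ respects $\forall$ and $\exists$, and (ii) that $F_\equiv$ satisfies the extra closure clause of Definition \ref{def filtros epist}.

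For (i), I would argue straight from the defining property of an epistemic BL-filter. Assuming $a \equiv_F b$, that is $a \to b \in F$ and $b \to a \in F$, the clause ``if $x \to y \in F$ then $\forall x \to \forall y \in F$ and $\exists x \to \exists y \in F$'' applied to both $a \to b$ and $b \to a$ yields $\forall a \to \forall b,\ \forall b \to \forall a \in F$ and $\exists a \to \exists b,\ \exists b \to \exists a \in F$, which is exactly $\forall a \equiv_F \forall b$ and $\exists a \equiv_F \exists b$. Hence $\equiv_F$ is a congruence of the full EBL-signature.

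For (ii), the cleanest route is to pass to the quotient. Since $\mathbb{EBL}$ is a variety and $\equiv$ is an EBL-congruence, the quotient $\mathbf{A}/{\equiv}$ is again an EBL-algebra and the projection $\pi$ is an EBL-homomorphism. If $a \to b \in F_\equiv$, i.e. $a \to b \equiv 1$, then $\pi(a) \to \pi(b) = 1$ in $\mathbf{A}/{\equiv}$, so monotonicity (M$\forall$) and (M$\exists$) of Lemma \ref{propiedades}, applied in $\mathbf{A}/{\equiv}$, give $\pi(\forall a) \to \pi(\forall b) = 1$ and $\pi(\exists a) \to \pi(\exists b) = 1$, that is $\forall a \to \forall b \in F_\equiv$ and $\exists a \to \exists b \in F_\equiv$. (One can also avoid the quotient: from $a \to b \equiv 1$ and divisibility $a \wedge b = a \ast (a \to b)$ one gets $a \wedge b \equiv a$, whence (E4a) gives $\forall a \wedge \forall b = \forall(a \wedge b) \equiv \forall a$, i.e. $\forall a \to \forall b \equiv 1$; a dual computation using $a \vee b \equiv b$ and (E4b) handles $\exists$.) Thus $F_\equiv$ is an epistemic BL-filter.

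Finally, bijectivity follows by combining these facts with the BL-level result. The assignment $F \mapsto \equiv_F$ sends epistemic BL-filters to EBL-congruences and $\equiv \mapsto F_\equiv$ sends EBL-congruences back to epistemic BL-filters; that they are mutually inverse is inherited from the classical bijection, since $F_{\equiv_F} = \{a : a \equiv_F 1\} = F$ by the ``moreover'' clause, and $\equiv_{F_\equiv} = \equiv$ because $a \equiv b$ iff $a \to b \equiv 1$ and $b \to a \equiv 1$ iff $a \equiv_{F_\equiv} b$. The only step demanding real care is (ii); everything else is bookkeeping layered on top of the known filter–congruence correspondence for BL-algebras.
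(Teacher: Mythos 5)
Your proposal is correct and follows essentially the same route as the paper's own proof: the BL-level correspondence is quoted, compatibility of $\equiv_F$ with $\forall,\exists$ is read off directly from the defining clause of an epistemic BL-filter, and the converse direction is handled by passing to the quotient EBL-algebra and invoking (M$\forall$) and (M$\exists$). Your parenthetical alternative for step (ii) via divisibility and (E4a)/(E4b) is a valid quotient-free variant, but it does not change the substance of the argument.
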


\begin{proof}
The fact that the congruence $\equiv_F$ of a BL-algebra ${\bf A}$ is also a congruence of the EBL-algebra ${\bf A}$ follows immediately from the definition of epistemic \textsc{BL}-filter.
 We will check that  $F = \{ a \in A :  a \equiv_F 1 \}$. In details,  $a \to 1=1 \in F$ and if $a \in F$, since $a = 1 \to a$, we have $1 \to a \in F$. Hence $a \equiv_F 1$. On the other hand, if we consider $a \in \{ a \in A :  a \equiv_F 1 \}$, is immediate that $a = 1 \to a \in F$.

	To complete the proof, assume now that $\equiv$ is a congruence on $\mathbf{A}$, in which case the quotient algebra ${\mathbf{A}}/{\equiv}$ is also a \textsc{EBL}-algebra. We shall denote by $\{[a]:a\in A\}$ the set of equivalence classes of ${\mathbf{A}}/{\equiv}.$ It is already known that $F_\equiv$ is a $BL$-filter. Let us see that $F_{\equiv}=\{ a \in A :  a \equiv 1\}$ is an epistemic $BL$-filter. Consider $a,b\in A $ such that $a \to b \in F_{\equiv}$. Then by $M\forall $ we have:
	\begin{center}
		\begin{tabular}{c c c }
			$ a \to b \equiv 1$ & $\Leftrightarrow$ & $[a \to b] = [1]$\\
			& $\Leftrightarrow$ & $[a] \to [b] = [1]$\\
			& $\Rightarrow$ & $\forall  [a] \to \forall  [b] =[1]$\\
			& $\Leftrightarrow$ & $ [\forall  a] \to  [\forall  b] = [1]$\\
			& $\Leftrightarrow$ & $[\forall  a \to \forall  b] = [1]$\\
			& $\Leftrightarrow$ & $\forall  a \to \forall  b \equiv 1$\\
			& $\Leftrightarrow$ & $\forall  a \to \forall  b \in F$\\
		\end{tabular}
	\end{center}
	Analogously for $\exists  a \to \exists b\in F$, using property $M\exists $. Hence, $F_\equiv$ is \textsc{EBL}-filter and is also true that $a \equiv b$ if and only if $a \to b \equiv 1$ and $b \to a \equiv 1$.
\end{proof}

\section{Subvarieties of EBL-algebras}
\label{sec:2}

In this section we will see that EBL-algebras generalize two well-studied classes of algebras. In particular, we are interested in subclasses that are algebraic counterparts  of modal fuzzy logic KD45 systems. First, given an epistemic BL-algebra $\left\langle {\bf A}, \forall, \exists \right\rangle$, we will show that when the reduct $A$  is a Boolean algebra, the system  $\langle {\bf A}, \exists \rangle$ is a Pseudomonadic algebra in the sense of \cite{Bez2002}. On the other hand, if the BL-reduct of {\bf A} is a G\"odel structure, we proof that the resultant algebra is a Bi-modal G\"odel algebra defined by \cite{CaiRod2015}.
\subsection{Pseudomonadic Algebras}
\label{sec:3}
In \cite{Bez2002} the author introduce the class of Pseudomonadic algebras as natural generalization of Halmos' monadic algebras and shows that they serve as algebraic models of KD45 over classical logic.
\begin{definition}\label{def pseudomonadicas}\cite[Definition 2.1]{Bez2002} An algebra $\langle {\bf B}, \exists \rangle$ is said to be a \textit{Pseudomonadic algebra} if ${\bf B}$ is a boolean algebra and $\exists$ is a unary operator on $B$ satisfying the following identities for every $a, b \in B$:
	\begin{description}
		\item[(P1)]$\exists 0 =0$,
		\item[(P2)]$\exists(a \vee b)=\exists a \vee \exists b$,
		\item[(P3)]$\exists(\exists a \wedge b)=\exists a \wedge \exists b$,
		\item[(P4)]$\neg \exists a \leq \exists \neg a$.
	\end{description}
\end{definition}
The class of Pseudomonadic algebras form a variety which is denoted by $\mathbb{PMA}$ and it is a proper extension of Halmos' variety of monadic algebras. In this case we use $\forall$ as abbreviation of the operator $\neg \exists \neg$.\\
The next lemma lists some properties that hold true in any Pseudomonadic algebra that we will use in the main theorem.

\begin{lemma}\cite[Lemma 2.1]{Bez2002} \label{Prop pseudomonadicas} The following identities hold in every Pseudomonadic algebra:
		\begin{description}
			\item[(P5)] $\forall a \leq \exists a$,
			\item[(P6)] $\forall 1=1$,
			\item[(P7)] $\exists 1=1$,
			\item[(P8)] $\forall 0=0$,
			\item[(P9)] $\exists \exists a= \exists a$,
			\item[(P10)] $\forall \forall a= \forall a$,
			\item[(P11)] $\forall \exists a =\exists a$,
			\item[(P12)] $\exists \forall a = \forall a$,
			\item[(P13)] $\exists \neg \exists a = \neg \exists a$,
			\item[(P14)] $\forall \neg \forall a =\neg \forall a$,
			\item[(P15)] $\forall (\forall a \vee b)=\forall a \vee\forall b$,
			\item[(P16)] $\forall(a \wedge b)=\forall a \wedge \forall b$,
			\item[(P17)] $\forall(a \to b) \leq \forall a \to \forall b$,
			\item[(P18)] $\exists( \exists a \to a)=1$,
			\item[(P19)] $\forall(\forall a \to a)=1$.
		\end{description}
\end{lemma}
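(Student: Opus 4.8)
The plan is to derive all of (P5)--(P19) from the four axioms (P1)--(P4) together with the Boolean identities, exploiting in particular the translation $a\to b=\neg a\vee b$ and the De Morgan laws, so that $\forall=\neg\exists\neg$ can be dualized freely against $\exists$. The first preliminary fact I would record is monotonicity of $\exists$: from (P2), $a\le b$ gives $\exists b=\exists(a\vee b)=\exists a\vee\exists b$, hence $\exists a\le\exists b$; dualizing, $\forall$ is monotone as well. This single observation is used silently in several of the later steps.

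Next I would settle the boundary values and the idempotency laws. (P6) is immediate: $\forall 1=\neg\exists\neg 1=\neg\exists 0=\neg 0=1$ by (P1). For (P7), instantiate (P4) at $a=0$ to get $\neg\exists 0\le\exists\neg 0$, i.e. $1\le\exists 1$ using (P1); hence $\exists 1=1$, and then $\forall 0=\neg\exists\neg 0=\neg\exists 1=0$ gives (P8). The idempotency law (P9) drops out of (P3) by setting $b=1$: $\exists\exists a=\exists(\exists a\wedge 1)=\exists a\wedge\exists 1=\exists a$ via (P7), and (P10) is its $\neg\exists\neg$ dual. Finally (P5) is a complementation argument: $\forall a\le\exists a$ iff $\neg\forall a\vee\exists a=1$, and $\neg\forall a\vee\exists a=\exists\neg a\vee\exists a=\exists(\neg a\vee a)=\exists 1=1$ by (P2) and (P7).

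The crux of the lemma is (P13), namely $\exists\neg\exists a=\neg\exists a$, since it is the only identity that is not pure Boolean duality and it is what drives (P11), (P12) and (P14). For the inequality $\exists\neg\exists a\le\neg\exists a$ I would put $b=\neg\exists a$ in (P3): then $\exists(\exists a\wedge\neg\exists a)=\exists a\wedge\exists\neg\exists a$, whose left-hand side is $\exists 0=0$ by (P1), forcing $\exists a\wedge\exists\neg\exists a=0$. For the reverse inequality I would instantiate (P4) at $\exists a$, giving $\neg\exists\exists a\le\exists\neg\exists a$, and simplify the left side with (P9). With (P13) in hand, (P11) is $\forall\exists a=\neg\exists\neg\exists a=\neg\neg\exists a=\exists a$; and (P12) and (P14) follow similarly, applying (P13) and (P11) to $\neg a$ and reading the result through $\forall=\neg\exists\neg$.

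The remaining identities are routine once the above are available. (P16) and (P15) are the De Morgan duals of (P2) and (P3) respectively; for (P15) one rewrites $\forall(\forall a\vee b)=\neg\exists(\exists\neg a\wedge\neg b)$ and applies (P3). (P17) follows from (P16) by residuation: $\forall(a\to b)\wedge\forall a=\forall((\neg a\vee b)\wedge a)=\forall(a\wedge b)\le\forall b$. For (P18) I expand $\exists(\exists a\to a)=\exists(\neg\exists a\vee a)=\exists\neg\exists a\vee\exists a=\neg\exists a\vee\exists a=1$ using (P2) and (P13); and for (P19) I rewrite $\forall a\to a=\exists\neg a\vee a=\forall(\exists\neg a)\vee a$ via (P11), apply (P15), and use (P11) again to reach $\exists\neg a\vee\forall a=\neg\forall a\vee\forall a=1$. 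I expect (P13) to be the only genuinely non-mechanical step; everything else is dictated by duality and the Boolean identities.
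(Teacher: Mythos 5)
Your proof is correct in every step. Note, however, that the paper itself offers no proof of this lemma: it is imported verbatim as Lemma 2.1 of \cite{Bez2002}, so there is nothing in the text to compare your argument against. Judged on its own terms, your derivation is complete: the monotonicity observation from (P2), the boundary values (P6)--(P8), the idempotency laws (P9)--(P10) obtained from (P3) with $b=1$, and the complementation argument for (P5) are all sound; the identification of (P13) as the one non-mechanical step is apt, and both halves of it check out --- the instance $b=\neg\exists a$ of (P3) forces $\exists a\wedge\exists\neg\exists a=\exists 0=0$, hence $\exists\neg\exists a\le\neg\exists a$, while (P4) at $\exists a$ together with (P9) gives the converse. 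From there (P11), (P12), (P14) are indeed pure $\neg\exists\neg$ bookkeeping, (P15)--(P16) are the De Morgan duals of (P3) and (P2), and your uses of (P11) to rewrite $\exists\neg a$ as $\forall\exists\neg a$ before invoking (P15) in (P19) is exactly the right move. The only stylistic remark is that you could make the dependency order explicit (you need (P9) before the second half of (P13), and (P13) before (P11)); as written the order you present them in already respects this, so nothing is circular.
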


\begin{theorem} \label{epis=pseud}
	Let ${\bf A}$ be a Boolean algebra. Then $\langle {\bf A}, \forall, \exists \rangle$ is an EBL-algebra iff $\langle {\bf A},\exists \rangle$ is a Pseudomonadic algebra.
\end{theorem}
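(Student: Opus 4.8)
The plan is to prove both implications by matching the EBL axioms against the Pseudomonadic identities, using that a Boolean $\mathbf{A}$ is a BL-algebra in which $a \ast b = a \wedge b$, $a \to b = \neg a \vee b$ and $\neg\neg a = a$, and that in a Pseudomonadic algebra $\forall$ abbreviates $\neg\exists\neg$. Under this dictionary most axioms are literal restatements of one another, so the real work concentrates on the two axioms E2 and E3, which mix $\forall$, $\exists$ and $\to$.

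For the forward direction I would assume $\langle \mathbf{A},\forall,\exists\rangle \in \mathbb{EBL}$ with $\mathbf{A}$ Boolean and verify P1--P4 for $\exists$. Here P1 is exactly E$\exists$ and P2 is exactly E4b. Since $\ast = \wedge$, axiom E5 reads $\exists(a \wedge \exists b) = \exists a \wedge \exists b$, which is P3 up to commutativity. For P4 I would use E17 to write $\neg\exists a = \forall\neg a$ and then apply E1 to get $\forall\neg a \leq \exists\neg a$, whence $\neg\exists a \leq \exists\neg a$. I would also record that E17 together with $\neg\neg a = a$ yields $\forall a = \neg\exists\neg a$, so the primitive $\forall$ of the EBL-algebra coincides with the derived $\forall$ of the Pseudomonadic algebra; this makes the correspondence between the two structures exact.

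For the converse I would start from a Pseudomonadic $\langle \mathbf{A},\exists\rangle$, put $\forall := \neg\exists\neg$, and read off the easy axioms from Lemma \ref{Prop pseudomonadicas}: E$\forall$ is P6, E$\exists$ is P1, E1 is P5, E4a is P16, E4b is P2; E4 follows from P11 (so $\exists a \to \forall\exists a = \exists a \to \exists a = 1$), and E5 is again P3 once $\ast$ is replaced by $\wedge$.

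The main obstacle is E2 and E3, which I would settle with two ingredients. First, the negation identities $\neg\forall a = \exists\neg a$ and $\neg\exists a = \forall\neg a$, both immediate from $\forall = \neg\exists\neg$ and involutivity. Second, the distribution law P15, $\forall(\forall a \vee b) = \forall a \vee \forall b$, which via P11 ($\exists c = \forall\exists c$) also yields $\forall(\exists c \vee b) = \exists c \vee \forall b$. For E3 I would rewrite $\forall a \to b = \exists\neg a \vee b$, apply this second form to obtain $\exists\neg a \vee \forall b$, and note that the right-hand side $\forall a \to \forall b = \neg\forall a \vee \forall b = \exists\neg a \vee \forall b$ agrees with it. For E2 I would rewrite $a \to \forall b = \forall b \vee \neg a$, apply P15 directly with $\forall b$ as the $\forall$-summand to get $\forall b \vee \forall\neg a$, and match it against $\exists a \to \forall b = \neg\exists a \vee \forall b = \forall\neg a \vee \forall b$. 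I expect these two reductions to be the only place where genuine computation is required; everything else is bookkeeping across the translation $\ast = \wedge$ and $a \to b = \neg a \vee b$.
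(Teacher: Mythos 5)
Your proposal is correct and follows essentially the same route as the paper: both directions are handled by matching axioms under the Boolean dictionary, with the forward direction deriving P4 from E17 and E1 exactly as the paper does, and the converse isolating E2 and E3 as the only nontrivial verifications. The one (cosmetic) difference is that you settle E2 and E3 via P15 together with P11, whereas the paper unfolds $\forall = \neg\exists\neg$ and computes with P3, P12 and De Morgan laws; both reductions are valid, and your explicit remark that E17 plus involutivity forces the primitive $\forall$ to coincide with $\neg\exists\neg$ is a worthwhile point the paper leaves implicit.
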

\begin{proof}
	Assume first that $\langle \bf{A}, \exists, \forall \rangle$  is an  EBL-algebra and recall that since ${\bf A}$ is Boolean the equation $\neg\neg a=a$ holds in A. Then:
	\begin{description}
		\item[(P1)]$\exists 0= 0$ is satisfied by E$\exists$.
		\item[(P2)]$\exists (a \vee b)= \exists a \vee \exists b$ it is immediate from E4b.
		\item[(P3)] $\exists(\exists a \wedge b )= \exists a \wedge \exists b$, it is immediate from  E5.
		\item[(P4)] $\neg \exists a \leq \exists \neg a$. Indeed, applying E17 and E1, we have $\neg \exists a = \forall \neg a \leq \exists \neg a$.
	\end{description}
	Conversely, suppose that $\langle {\bf A}, \exists, \forall \rangle$ is a Pseudomonadic algebra. Properties (E$\forall$), (E$\exists$), (E1), (E4), (E4a), (E4b) and (E5) are immediate from P6, P1, P5, P13, P16, P2 and P3, respectively. Let's see that (E2) and (E3) are also satisfied:
	\begin{description}
		\item[(E2)] $\forall (a \to \forall b)=\exists a \to \forall b$. Considering P3 and the equivalence $\forall a = \neg \exists \neg a$, we have
		\begin{align*}
		\forall (a \to \forall b)&=\neg \exists \neg (\neg a \vee \forall b)\\
		&=\neg \exists (a \wedge \neg \forall b)\\
		&=\neg \exists (a \wedge \exists \neg b)\\
		&=\neg (\exists a \wedge \exists \neg b)\\
		&=\neg \exists a \vee \neg \exists \neg b \\
		&=\neg \exists a \vee \forall b \\
		&= \exists a \to \forall b
		\end{align*}
		\item[(E3)]$\forall(\forall a \to b)=\forall a \to \forall b$. Applying P12 of Lemma \ref{Prop pseudomonadicas} and P3,
		\begin{align*}
		\forall (\forall a \to b)&=\forall (\neg \forall a \vee b)\\
		&=\forall \neg(\forall a \wedge \neg b)\\
		&=\neg \exists (\forall a \wedge \neg b)\\
		&=\neg\exists (\exists \forall a \wedge \neg b)\\
		&= \neg(\exists \forall a \wedge \exists \neg b)\\
		&=\neg (\forall a \wedge \neg \forall b)\\
		&=\neg \forall a \vee \forall b \\
		&=\forall a \to \forall b
		\end{align*}
	\end{description}
\end{proof}

To characterize simple, subdirectly irreducible and well-connected Pseudomonadic algebras, Bezhanishvili defines a special type of filters that are in correspondence with congruences of  Pseudomonadic algebras.

\begin{definition}\label{def forall filtros}  Given a Pseudomonadic algebra ${\bf B}$, a subset $F \subseteq B$ is a \textit{$\forall$-filter} of ${\bf B}$ if $F$ is a filter and  if $a \in F$ implies $\forall a \in F$, for each $a \in B$.
\end{definition}

\begin{theorem} Let $\langle \bf{A}, \exists, \forall \rangle$ be an EBL-algebra such that $\bf{A}$ is Boolean and let $F$ be a filter of $A$.  Then $F$ is an epistemic filter iff $F$ is a $\forall$-filter.
\end{theorem}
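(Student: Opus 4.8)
The plan is to establish the two implications separately, using throughout that a Boolean EBL-algebra is a Pseudomonadic algebra by Theorem~\ref{epis=pseud}, so that every identity of Lemma~\ref{Prop pseudomonadicas} is at my disposal, together with the Boolean law $\neg\neg a = a$ and the interdefinability $\forall = \neg\exists\neg$, $\exists = \neg\forall\neg$. I would note at the outset that in a Boolean algebra an implicative filter is just a filter, so the two notions share the same underlying filter requirement and only the closure conditions need to be compared.

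For the forward direction I assume $F$ is an epistemic filter and take $a \in F$. Since $a = 1 \to a$, this implication lies in $F$, and the epistemic condition applied to it gives $\forall 1 \to \forall a \in F$; as $\forall 1 = 1$ by (E$\forall$), this reads $\forall a \in F$, so $F$ is a $\forall$-filter. This direction is immediate.

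For the converse I assume $F$ is a $\forall$-filter and take $a \to b \in F$. The necessity clause is handled first: the $\forall$-filter condition yields $\forall(a \to b) \in F$, and since $\forall(a \to b) \leq \forall a \to \forall b$ by (P17) and filters are upward closed, $\forall a \to \forall b \in F$. For the possibility clause I plan to pass to the contrapositive: in the Boolean reduct $a \to b = \neg b \to \neg a$, so $\neg b \to \neg a \in F$; applying the necessity argument to this implication gives $\forall \neg b \to \forall \neg a \in F$, and (E17) rewrites this as $\neg \exists b \to \neg \exists a \in F$, which by contraposition is exactly $\exists a \to \exists b$. Hence $\exists a \to \exists b \in F$ and $F$ is epistemic.

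All the steps are short computations; the only one demanding care is the possibility clause of the converse, where the available control over $\forall$ must be transferred to $\exists$. This transfer relies essentially on the Boolean hypothesis: it is $\neg\neg a = a$ that makes contraposition an equivalence and renders the identity $\forall\neg = \neg\exists$ usable symmetrically, so I expect this to be the main, though still mild, obstacle.
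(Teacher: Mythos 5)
Your proposal is correct and follows essentially the same route as the paper's proof: the forward direction via $a = 1\to a$ and (E$\forall$), and the converse via the $\forall$-filter condition plus (P17) for the $\forall$-clause, then contraposition together with $\forall\neg = \neg\exists$ (E17) to transfer the conclusion to $\exists$. No substantive differences.
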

\begin{proof}
	Suppose that $F$ is an epistemic filter and let $a \in A$ be such that $a \in F$. Since $a = 1 \to a$, we have $1 \to a \in F$, with which  $\forall 1 \to \forall a \in F$, i.e., $\forall a \in F$.\\
	Conversely, suppose that $F$ is a $\forall$-filter and let $a, b \in a$ such that $a \to b \in F$. By definition $\forall(a \to b) \in F.$
	Moreover, by Theorem \ref{epis=pseud} and property P17, we have $$\forall(a \to b) \leq \forall a \to \forall b$$
	with which, $\forall a \to \forall b \in F$.\\
	On the other hand, since in every Boolean algebra the inequation $a \to b \leq \neg b \to \neg a$ holds, then $\neg b \to \neg a \in F$. Being $F$ a $\forall$-filter, we get  $\forall (\neg b \to \neg a) \in F$.\\
Taking into account property P17 from Lemma \ref{Prop pseudomonadicas}, we have $\forall \neg b \to \forall \neg a \in F$, or equivalently, $\neg \exists b \to \neg \exists a \in F.$
	Therefore $\exists a \to \exists b \in F$.
\end{proof}

\subsection{Bi-modal G\"odel Algebras}
\label{sec:4}
 G\"odel algebras can be characterized as the subvariety of Heyting algebras determined by the prelinearity equation (\ref{eq:prelinearity}). They also can be thought of as the  subvariety of BL-algebras that satisfy the equation $a^2=a$, or equivalently  $a \ast b = a \wedge b$.\\

\medskip

\cite{CaiRod2015} show that the set  of valid formulas in the subclass of serial, transitive and euclidean GK-frames (G\"odel-Kripke frames) is axiomatized by adding some additional axioms and a rule to those of G\"odel fuzzy logic $G.$
The logic obtained is denoted $KD45(G)$ and has as algebraic semantics the variety of \textit{Bi-modal G\"odel algebras}. These algebras, of the form $\langle {\bf G}, \forall, \exists \rangle$ \footnote{ \cite{CaiRod2015} use the notations $I$ and $K$ for $\forall$ and $\exists$, respectively.}, are such that ${\bf G}$ is a G\"odel algebra and $\forall$, $\exists$ are unary operators over $G$  that satisfy the following identities:
	\begin{description}
		\item[(G1)] $\forall (a \ast b) = \forall a \ast \forall b$,
		\item[(G2)] $\forall 1=1$,
		\item[(G3)] $ \exists a \to \forall b \leq \forall (a \to b)$,
		\item[(G4)] $\exists( a \vee b) = \exists a \vee \exists b$,
		\item[(G5)] $\exists 0=0$,
		\item[(G6)] $\exists(a \to b) \leq \forall a \to \exists b$,
		\item[(G7)] $\forall a \leq \exists a$,
		\item[(G8)] $\forall a \leq \forall \forall a$ \hspace{1cm} $\exists a \leq \exists \exists a$,
		\item[(G9)] $\exists a \leq \forall \exists a$ \hspace{1cm} $\exists \forall a \leq \forall a$.
	\end{description}
$KD45(G)$ is complete with respect to valuations in these algebras. We shall see that the class of epistemic G\"odel algebras  and the one of Bi-modal G\"odel algebras coincide.

\begin{theorem}
	If $\langle {\bf A}, \forall, \exists \rangle$ is an epistemic algebra such that ${\bf A}$  is a G\"odel algebra, then $\langle {\bf A}, \forall, \exists \rangle$ is a bi-modal algebra.
\end{theorem}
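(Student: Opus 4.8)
The plan is to verify the nine identities (G1)--(G9) directly in an arbitrary epistemic algebra $\langle\mathbf{A},\forall,\exists\rangle$ whose reduct is a G\"odel algebra, using throughout that $\ast=\wedge$ (so residuation reads $x\wedge y\le z$ iff $x\le y\to z$) and the properties collected in Lemma~\ref{propiedades}. Most of the list is immediate: (G1) is exactly (E4a) once $a\ast b$ is read as $a\wedge b$; (G2) is (E$\forall$); (G4) is (E4b); (G5) is (E$\exists$); (G7) is (E1); the two halves of (G8) are the idempotency laws (E8) and (E10); and the two halves of (G9) are (E11) and (E9). Thus all the content sits in the two Fischer--Servi-style inequalities (G3) and (G6), which couple $\forall$ and $\exists$ across an implication.

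For (G3), namely $\exists a\to\forall b\le\forall(a\to b)$, I would first rewrite the left-hand side with (E2) as $\exists a\to\forall b=\forall(a\to\forall b)$. The key point is that, although $a\to\forall b\le a\to b$ need not hold (since $\forall$ is not an interior operator), one can replace $a\to\forall b$ by a meet of the same $\forall$-value that does sit below $a\to b$. Concretely, transitivity of implication gives $(a\to\forall b)\wedge(\forall b\to b)\le a\to b$, while (E4a) together with (E18) yields $\forall\big((a\to\forall b)\wedge(\forall b\to b)\big)=\forall(a\to\forall b)\wedge\forall(\forall b\to b)=\forall(a\to\forall b)$. Applying $\forall$ to the displayed inequality and invoking monotonicity (M$\forall$) then gives $\forall(a\to\forall b)\le\forall(a\to b)$, which is (G3).

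For (G6), namely $\exists(a\to b)\le\forall a\to\exists b$, I would residuate to the equivalent form $\forall a\wedge\exists(a\to b)\le\exists b$ and use (E5), via $\forall a=\exists\forall a$ from (E9), to fold the left-hand side into $\exists\big(\forall a\wedge(a\to b)\big)$; it then suffices to prove $\exists\big(\forall a\wedge(a\to b)\big)\le\exists b$. Here naive monotonicity fails, because $\forall a\wedge(a\to b)\le b$ is false in general (again $\forall$ is not interior), so I would split by prelinearity, writing $1=(a\to b)\vee(b\to a)$, distributing, and applying $\exists$ through (E4b). On the $(a\to b)$-part one passes to the witness $\forall a\to b$ and uses the residuation bound $\forall a\wedge(\forall a\to b)\le b$; on the $(b\to a)$-part one exploits that $\exists(a\to b)$ collapses towards $\exists b$ together with $\forall a\le\exists a$ from (E1) and monotonicity of $\exists$ (M$\exists$), the point being that when $a\le b$ one has $\forall a\le\exists a\le\exists b$. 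Reassembling the two joinands yields the bound.

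I expect (G6) to be the main obstacle. The difficulty is structural: because $\exists$ is not a closure operator and $\forall$ is not an interior operator (as the Remark and Example~\ref{ej 1  epis no mon} show), the pointwise inequalities that would make the two sides comparable by a single monotonicity step are simply unavailable, so both (G3) and especially (G6) have to be reached by the indirect route of combining transitivity and residuation with the absorption identities (E18), (E4a), (E4b) and (E1). Isolating the correct ``witness'' terms and, for (G6), checking that the prelinearity split is genuinely non-degenerate (so that the two cases $a\le b$ and $b\le a$ are each controlled) is where the real care is needed.
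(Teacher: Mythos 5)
Your treatment of the routine identities matches the paper's, and your argument for (G3) is correct and takes a genuinely different route: the paper derives (G3) from the auxiliary inequality $\forall(a\to b)\le\forall a\to\forall b$ together with (E3) and the exchange law (\ref{eq:3}), whereas you obtain it directly from the transitivity bound $(a\to\forall b)\wedge(\forall b\to b)\le a\to b$, then (M$\forall$), (E4a) and (E18); that is arguably cleaner. Your reduction of (G6) to $\exists\bigl(\forall a\wedge(a\to b)\bigr)\le\exists b$ via residuation, (E9) and (E5) is also correct.

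The final step of (G6), however, has a genuine gap. The split by $1=(a\to b)\vee(b\to a)$ is degenerate: distributing it into $\forall a\wedge(a\to b)$ gives $\bigl(\forall a\wedge(a\to b)\bigr)\vee\bigl(\forall a\wedge(a\to b)\wedge(b\to a)\bigr)$, whose first joinand is the original term, so after applying (E4b) you are left with exactly the inequality you started from. The ``witness'' move, replacing $a\to b$ by $\forall a\to b$ so as to use $\forall a\wedge(\forall a\to b)\le b$, would need $a\to b\le\forall a\to b$, i.e.\ $\forall a\le a$, which is precisely what fails in EBL-algebras (Example \ref{ej 1  epis no mon}). Nor can the two cases $a\le b$ and $b\le a$ be treated separately as if on a chain: congruences of EBL-algebras correspond to epistemic filters rather than to all BL-filters, so subdirectly irreducible EBL-algebras need not have totally ordered reducts, and a case analysis valid on chains does not yield an equational proof. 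The paper's proof of (G6) rests on a mechanism your sketch does not reproduce: it starts from $\forall(b\to\exists b)=1$ (obtained from (E11) and (E2)), pushes this through $b\to\exists b\le(a\to b)\to(a\to\exists b)$, exchange and the auxiliary inequality to reach $\forall a\to\forall\bigl((a\to b)\to\forall\exists b\bigr)$, and only then applies (E2) to turn $\forall\bigl((a\to b)\to\forall\exists b\bigr)$ into $\exists(a\to b)\to\exists b$. That application of (E2) is what brings $\exists(a\to b)$ into the estimate at all, and some such step is unavoidable; without it your argument for (G6) does not close.
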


\begin{proof} Let ${\bf A}$ be a G\"odel algebra such  that $\langle {\bf A}, \forall, \exists \rangle$ satisfy axioms of Definition \ref{def epist alg}. Let's check that G1-G9 are satisfied.
\smallskip
(G1), (G2), (G4), (G5) and (G7) are immediate from E4a,  E$\forall$, E4b, E$\exists$ and E1 respectively. To check G3 and G6 we will use the auxiliary result: \begin{equation}\label{eq:aux} \forall(a \to b)\leq \forall a \to \forall b\end{equation} that holds in ${\bf A}.$ Indeed, $a \ast b =a \wedge b = a \ast (a \to b) \leq b$, then by monotonicity of $\forall$, we have $\forall (a \ast (a \to b)) \leq \forall b$. Applying G1 and residuation we obtain $\forall (a \to b) \leq \forall a \to \forall b$.

\smallskip

We now check (G3), that is, $ \exists a \to \forall b \leq \forall (a \to b)$.  Since $b \leq a \to b$, by M$\forall$, we have $\forall b \leq \forall(a \to b)$. Then
		\begin{align*}
		1 &= \forall b \to \forall(a \to b)\\
		&=\forall(\forall b \to (a \to b))  & E3\\
		&= \forall(a \to (\forall b \to b))& (\ref{eq:3})\\
		&\leq\forall((a \to \forall b) \to (a \to b)) & \mbox{(\ref{eq:residuation}) and M$\forall$}\\
		&\leq \forall(a \to \forall b) \to \forall(a \to b) & (\ref{eq:aux})\\
		&=(\exists a \to \forall b) \to \forall(a \to b) &E2\\
		\end{align*}
Hence $\exists a \to \forall b \leq \forall(a \to b).$ To see that (G6) holds, that is, $\exists(a \to b) \leq \forall  a \to \exists b$, we first see that E11 and E2 imply
$$1 = \exists b \to \exists b= \exists b \to \forall \exists b = \forall (b \to \forall \exists b) = \forall (b \to \exists b).$$ The residuation condition yield $$b\to \exists b\le a\to (b\to \exists b)\le (a\to b)\to (a\to \exists b),$$ thus
		\begin{align*}
		1 &= \forall (b \to \exists b) \\
		& \leq\forall((a \to b)\to (a \to \exists b)) & \mbox{M$\forall$}  \\
		&=\forall( a \to ((a\to b) \to \exists b)) & (\ref{eq:3})\\
		&\leq \forall a \to \forall ((a\to b) \to \exists b) & (\ref{eq:aux})\\
		&= \forall a \to \forall ((a\to b) \to \forall \exists b) & E1\\
		&=\forall a \to (\exists(a \to b) \to \forall \exists b) & E2\\
		&=\forall a \to (\exists(a \to b) \to  \exists b) & E11\\
		&=\exists (a \to b) \to (\forall a \to \exists b).
		\end{align*}
Then $\exists(a \to b) \leq \forall a \to \exists b$. Lastly (G8) is immediate from E8 and E10 while (G9) follows from E4 and E9.
		\end{proof}

\begin{theorem} If $\langle {\bf A}, \forall, \exists \rangle$ is a bi-modal G\"odel algebra for KD45, then $\langle {\bf A}, \forall, \exists \rangle$ is an epistemic G\"odel algebra.
\end{theorem}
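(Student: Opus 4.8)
The plan is to check the nine EBL identities of Definition~\ref{def epist alg} one at a time from G1--G9, keeping in mind that $\ast=\wedge$ in a G\"odel algebra. Six of them are immediate identifications: E$\forall$ is G2, E$\exists$ is G5, E1 is G7, E4 is the first clause of G9, E4a is G1 and E4b is G4. Before attacking the remaining E2, E3 and E5 I would record the structural facts that make the arguments go through: monotonicity of $\forall$ and $\exists$ (immediate from G1 and G4), the value $\exists 1=1$ (from G2 and G7), the auxiliary inequality $\forall(a\to b)\le\forall a\to\forall b$ (obtained exactly as in the previous theorem, from $a\wedge(a\to b)\le b$ together with G1 and residuation), and the idempotency laws $\forall\forall a=\forall a$ and $\exists\forall a=\forall a$ (for the first, $\forall a\le\forall\forall a\le\exists\forall a\le\forall a$ by G8, G7, G9).

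With these in hand E3 is short: for $\le$ the auxiliary inequality gives $\forall(\forall a\to b)\le\forall\forall a\to\forall b=\forall a\to\forall b$, and for $\ge$, axiom G3 with $\exists\forall a=\forall a$ gives $\forall a\to\forall b=\exists\forall a\to\forall b\le\forall(\forall a\to b)$. For E2 the inequality $\ge$ is again G3 applied with $\forall b$ in place of $b$ (using $\forall\forall b=\forall b$). The direction $\le$ is the first genuinely new step: by residuation it suffices to show $\exists a\wedge\forall(a\to\forall b)\le\forall b$. Here I would use the Heyting fact $a\le(a\to\forall b)\to\forall b$, monotonicity of $\exists$, the residuated form of G6 (namely $\forall x\wedge\exists(x\to y)\le\exists y$) with $x=a\to\forall b$ and $y=\forall b$, and $\exists\forall b=\forall b$, to get
\begin{align*}
\exists a\wedge\forall(a\to\forall b)
 &\le \exists\bigl((a\to\forall b)\to\forall b\bigr)\wedge\forall(a\to\forall b)\\
 &\le \exists\forall b=\forall b .
\end{align*}

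For E5, which in the G\"odel case reads $\exists(a\wedge\exists b)=\exists a\wedge\exists b$, the inequality $\ge$ uses the same two devices: G6 in the form $\exists(\exists b\to(a\wedge\exists b))\le\forall\exists b\to\exists(a\wedge\exists b)$, together with the identities $\exists b\to(a\wedge\exists b)=\exists b\to a$ and $a\le\exists b\to a$, yields $\exists a\wedge\forall\exists b\le\exists(a\wedge\exists b)$, and then $\exists b\le\forall\exists b$ (G9) gives $\exists a\wedge\exists b\le\exists(a\wedge\exists b)$. The inequality $\le$ splits as $\exists(a\wedge\exists b)\le\exists a$ (monotonicity) and $\exists(a\wedge\exists b)\le\exists\exists b=\exists b$, the last equality being the idempotency law flagged below.

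The main obstacle is precisely this law $\exists\exists a=\exists a$ (equivalently $\forall\exists a=\exists a$). One half, $\exists a\le\forall\exists a\le\exists\exists a$, is immediate from G9 and G7, and a short computation gives $\exists\exists a=\forall\exists a$ (apply $\exists$ to $\exists a\le\forall\exists a$ and use $\exists\forall x=\forall x$). The reverse inequality $\exists\exists a\le\exists a$ is the algebraic shadow of transitivity and resists the one-line residuation manipulations used above; in the frame semantics it says that $\exists a$ is constant on the accessibility cluster. I would derive it by combining the transitivity axiom G8 with the interaction axioms G3 and G6, or, failing a short algebraic proof, invoke the corresponding property already established for $KD45(G)$ in \cite{CaiRod2015}. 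Once it is available, E2, E3 and E5 close as above, and the theorem follows.
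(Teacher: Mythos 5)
Your decomposition and your arguments for E2, E3 and the two inequalities of E5 are essentially the ones in the paper: the paper packages your E2 computation as the auxiliary inequality $\forall(a\to b)\le\exists a\to\exists b$ (derived, exactly as you do, from $a\le(a\to b)\to b$, monotonicity of $\exists$ and G6) and then specializes $b$ to $\forall b$, using $\exists\forall b\le\forall b$ from G9; the E3 and E5 computations coincide with yours. The one step you leave open, $\exists\exists a\le\exists a$, is precisely the step the paper glosses over: its proof of E5 ends with ``by monotony and G8, $\exists(a\wedge\exists b)\le\exists\exists b\le\exists b$'', but G8 as printed only gives $\exists a\le\exists\exists a$, the opposite inequality.

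Your suspicion that this inequality resists derivation is correct in a strong sense: it is \emph{not} derivable from G1--G9 as printed, so neither your suggested combination of G8 with G3 and G6 nor any other manipulation can produce it. On the four-element G\"odel chain $\{0,\tfrac13,\tfrac23,1\}$ put $\forall 0=\forall\tfrac13=0$, $\forall\tfrac23=\forall 1=1$ and $\exists 0=0$, $\exists\tfrac13=\tfrac23$, $\exists\tfrac23=\exists 1=1$; a routine check shows that G1--G9 (with G8 in its printed form) all hold, yet $\exists\exists\tfrac13=1>\tfrac23=\exists\tfrac13$, and E5 fails since $\exists(1\ast\exists\tfrac13)=1\neq\tfrac23=\exists 1\ast\exists\tfrac13$. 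The resolution is that the second clause of G8 is the $\Diamond$-transitivity axiom and should read $\exists\exists a\le\exists a$, as in \cite{CaiRod2015}; under that reading the missing step is an axiom, your proof closes, and it coincides with the paper's. So your fallback of importing the property from $KD45(G)$ is the right move, but it should be recorded that the inequality is an axiom (modulo the typo) rather than a derivable consequence of the remaining postulates.
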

\begin{proof}
	Let $\langle {\bf A}, \forall, \exists \rangle$ be a bi-modal G\"odel algebra, i.e., it satisfies G1-G9. Properties (E$\forall$), (E$\exists$), (E1), (E4), (E4a) and (E4b) are immediate from G2, G5, G7, G9, G1 and G4, respectively. The remaining properties require some calculations. To proof E2, let's see that
	\begin{equation}\label{eq:aux2}
	\forall(a\to b)\leq \exists a \to \exists b
	\end{equation}
	is satisfied. Indeed, since $a \ast (a \to b) \leq b$, by (\ref{eq:residuation}), we have $a  \leq (a \to b) \to b$. Then, applying G4 and G6, we obtain $\exists a \leq \exists((a \to b) \to b) \leq \forall (a \to b) \to \exists b$. Consequently, $\forall(a \to b)\leq \exists a \to \exists b$.\\
	Now we are able to check the properties E2, E3 and E5:
	\begin{description}
	    \item[(E2)] $\forall(a \to \forall b)=\exists a \to \forall b$.
	    By G8  and G3 we get  $\exists a \to \forall b \leq \exists a \to \forall \forall b \leq \forall(a \to \forall b)$.
	    For the other inequality,
	    \begin{align*}
	    \forall( a \to \forall b) &\leq \exists a \to \exists \forall b &(\mbox{\ref{eq:aux2}})\\
	    &\leq \exists a \to \forall b. &G9
	    \end{align*}
	     Therefore  $\forall(a \to \forall b)=\exists a \to \forall b$.
	
	    \item[(E3)]  $\forall(\forall a \to b) =\forall a \to \forall b$. Since ${\bf A}$ satisfies E4a, then M$\forall$ also holds in ${\bf A}$. This monotonicity, together with G1 and residuation imply that equation (\ref{eq:aux}) holds in ${\bf A}$. Then \begin{align*}
	    \forall(\forall a \to b) &\leq \forall \forall a \to \forall b & (\ref{eq:aux}) \\
	    &\leq \forall a \to \forall b &G8
	    \end{align*}
	
	    Hence $\forall(\forall a \to b) \leq \forall a \to \forall b.$ For the other inequality G9 and an application of G3 yield, $$\forall a \to \forall b \leq \exists \forall a \to \forall b \leq \forall(\forall a \to b).$$
		\item[(E5)]$\exists(a \ast \exists b)=\exists a \ast \exists b$.
		\begin{align*}
		a \ast \exists b &= a \wedge \exists b \\
		a  & \leq \exists b \to (a \wedge \exists b)  &(\mbox{\ref{eq:residuation}})\\
		\exists a & \leq \exists (\exists b \to (a \wedge \exists b)) & G4\\
		\exists a & \leq \forall \exists b \to \exists (a \wedge \exists b) & G6\\
		\exists a & \leq  \exists b \to \exists (a \wedge \exists b) & G9\\
		\exists a \ast \exists b & \leq  \exists (a \wedge \exists b) & (\mbox{\ref{eq:residuation}})\\
		\end{align*}
		With which $\exists a \wedge \exists b \leq \exists(a \wedge \exists b)$.\\
		On the other hand, since $a \wedge \exists b \leq a$, applying G4, $\exists(a \wedge \exists b)\leq \exists a$. Similarly, $a\wedge \exists b \leq \exists b$, then, by monotony and G8, $\exists (a \wedge \exists b)\leq \exists \exists b \leq \exists b$.\\
		As consequence, $\exists(a \wedge \exists b) \leq \exists a \wedge \exists b$.
	
	\end{description}

	\end{proof}
\section{c-EBL-algebras}
\label{sec:5}
We will now introduce a special class of epistemic algebras that will become important to establish a connection with possibilistic frames in the next section. Given an epistemic BL-algebra $\langle{\bf A}, \forall, \exists\rangle$, if the set $$\{a \in A: \forall a=1\}\subsetneq A$$ has a least element $c$, then $c$  will be called \emph{focal element} of {\bf A}. For example, if we consider the epistemic BL-algebra given in Example \ref{ej 1  epis no mon} we have that $\min\{a \in A: \forall a=1\}=\frac{2}{3}$, i.e., the focal element exists and it is $\frac{2}{3}$. In fact, it is immediate that if ${\bf A}$ is an EBL-algebra such that $A$ is finite, the focal element exists. But this is not the case of every EBL-algebra, as we can see in the following example: consider the BL-algebra $\bf{A}=[0,1]_{MV}\oplus[0,1]_{\Pi}$, where $\oplus$ is the ordinal sum operation. Let's denote $0_{\Pi}$ the zero element in the second summand. Defining the operators $\forall$ and $\exists$ by:
	    \begin{center}
	    $\forall x = \left\{
		\begin{array}{l l}
		1 & x \in (0,1]_{\Pi}\\
		0_{\Pi}  & x= 0_{\Pi}\\
		0 & x \in [0,1)_{MV}
		\end{array}
		\right.$
		$ \exists x = \left\{
		\begin{array}{l l}
		1 & x \in (0,1]_{\Pi}\\
    	0_{\Pi} &  x \in (0,1)_{MV} \cup \{0_{\Pi}\}\\
		0 & x=0
		\end{array}
		\right.$
		\end{center}
we obtain a structure of EBL-algebra such that the set $\{a \in A: \forall a=1\}=(0,1]_{\Pi}$ has no a least element.

\begin{remark} The focal element of an EBL-algebra satisfies
	\begin{equation} \label{rem c=min}
	c = \min_{ a \in A}\{(\forall  a \to a) \wedge (a \to \exists  a)\}.
	\end{equation}
\end{remark}
\begin{proof} Let $x$ be an element of the form $x =(\forall  a \to a) \wedge (a \to \exists  a)$ for some $a \in A$. Then, taking into account E4a, E3, E11 and E2, we have
	\begin{align*}
	\forall x &=\forall((\forall  a \to a) \wedge (a \to \exists  a))\\
	&=\forall(\forall  a \to a) \wedge \forall(a \to \exists  a)\\
	&=(\forall a \to \forall a ) \wedge (\exists a \to \exists a)\\
	&=1.
	\end{align*}
	So, $x \in \{x \in A: \forall x=1\}$. Hence, by definition of focal element, $c \leq x$.
	On the other hand, as $\forall c=\exists c=1$, we can write $c= (\forall c \to c) \wedge (c \to \exists c)$. Therefore, $c$ is a least element of $\{ (\forall  a \to a) \wedge (a \to \exists  a), a \in A\}$.
\end{proof}
\begin{definition}\label{c-EBL}
	An EBL-algebra ${\bf A}$, we will be called a \emph{c-EBL-algebra}, if the focal element $c$ exists in $A$.
\end{definition}


On the class of c-EBL-algebras, the focal element plays a fundamental role, since it allows us to recover the unary operators $\forall$ and $\exists$, as the following theorem shows.

\begin{theorem}\label{c-teorema}
	Let {\bf A} be a c-EBL-algebra and let $B$ be the subalgebra given by Theorem \ref{subalgebra}, then
	\begin{description}\item $\forall a = \max\{b \in B: b \leq c \to a\}$  and  \item   $\exists  a = \min\{b \in B: c\ast a \leq b\}.$ \end{description}
\end{theorem}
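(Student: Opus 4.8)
The plan is to reduce both equalities to a single two-sided characterization over the subalgebra $B=\exists A=\forall A$ of Theorem~\ref{subalgebra}: namely, that for every $b\in B$ one has $b\le\forall a$ iff $b\ast c\le a$, and dually $\exists a\le b$ iff $c\ast a\le b$. From these equivalences the stated descriptions as a maximum and a minimum are immediate. Three facts will be used repeatedly: that $\forall a,\exists a\in B$; that $\forall$ and $\exists$ act as the identity on $B$, so $b=\forall b=\exists b$ for every $b\in B$; and that the focal element satisfies $c\le(\forall a\to a)\wedge(a\to\exists a)$ for all $a$ by (\ref{rem c=min}), together with $\forall c=1$, which holds because $c$ belongs to $\{a\in A:\forall a=1\}$.

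For the first formula I would first verify that $\forall a$ is an element of $\{b\in B:b\le c\to a\}$: we have $\forall a\in B$, and from $c\le\forall a\to a$ residuation (\ref{eq:residuation}) gives $\forall a\ast c\le a$, i.e. $\forall a\le c\to a$. For maximality, take any $b\in B$ with $b\le c\to a$; by residuation this is equivalent to $c\le b\to a$. Applying $\forall$ with monotonicity M$\forall$ and $\forall c=1$ yields $\forall(b\to a)=1$. Since $b=\forall b$, I would rewrite $\forall(b\to a)=\forall(\forall b\to a)$ and collapse this by E3 to $\forall b\to\forall a=b\to\forall a$; hence $b\to\forall a=1$, that is $b\le\forall a$ by (\ref{eq:orden}).

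For the second formula, membership is symmetric: $\exists a\in B$, and from $c\le a\to\exists a$ residuation gives $c\ast a\le\exists a$, so $\exists a\in\{b\in B:c\ast a\le b\}$. For minimality, take $b\in B$ with $c\ast a\le b$, equivalently $c\le a\to b$ by residuation; applying $\forall$ and $\forall c=1$ gives $\forall(a\to b)=1$. Using $b=\forall b$, I would rewrite $\forall(a\to b)=\forall(a\to\forall b)$ and collapse it by E2 to $\exists a\to\forall b=\exists a\to b$; hence $\exists a\to b=1$, i.e. $\exists a\le b$ by (\ref{eq:orden}).

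The crux is the interplay between the two defining inequalities of the focal element and the modal-collapse identities E2 and E3: the inequalities $c\le\forall a\to a$ and $c\le a\to\exists a$ are exactly what place $\forall a$ and $\exists a$ inside the respective sets, while E3 and E2 force extremality once the hypothesis has been pushed through $\forall$ and $\forall c=1$ has erased the antecedent. The essential subtlety, and the reason the optimisation must range over $B$ rather than over all of $A$, is that E3 and E2 are only applicable because the competing bound $b$ is a fixed point of $\forall$; this also explains why no delicate handling of a product such as $\forall(b\ast c)$ is needed, even though EBL-algebras provide no general rule for it.
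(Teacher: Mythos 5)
Your proof is correct and follows essentially the same route as the paper's: membership of $\forall a$ and $\exists a$ in the respective sets via the two inequalities encoded in (\ref{rem c=min}) and residuation, then extremality by pushing the hypothesis through $\forall$ with M$\forall$, using $\forall c=1$, and collapsing via E3 (resp.\ E11 and E2). The only cosmetic difference is that you invoke $b=\forall b$ directly for $b\in B$, where the paper writes $b=\forall x$ (resp.\ $b=\exists y$) for some preimage; these are interchangeable.
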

\begin{proof}
	Since $B=\forall  a=\exists  a$, then $\forall  a \in B$ and $\exists  a \in B$ for every $a \in A$. On the other hand, since $c$ satisfies (\ref{rem c=min}), we have
	$$c \leq (\forall  a \to a) \wedge (a \to \exists  a)\leq \forall  a \to a$$
	for all $a \in A$. By residuation $\forall  a \leq c \to a.$
	Now assume there is $b \in B$ such that $b \leq c \to a$. Since $B=\forall  A$, there exists $x \in A$ such that $b = \forall  x$, and
	$\forall  x \leq c \to a$, or equivalently, $$c \leq \forall  x \to a.$$
	Taking into account the properties M$\forall$ and E3, we obtain
	$\forall  c \leq \forall  x \to \forall  a.$
	Since $\forall  c = 1$, it follows that $b=\forall  x \leq \forall  a$ and therefore $\forall  a = \max\{b \in B: b \leq c \to a\}$.\\
	Arguing as above, $$c \leq (\forall  a \to a) \wedge (a \to \exists  a)\leq a \to \exists  a$$
	for every $a \in A$. Thus $c \ast a \leq \exists  a.$
	Suppose there exists $b \in B$ such that $c \ast a \leq b$ with $b =\exists  y$ for some $y \in A$. By residuation $c \leq a \to \exists  y.$
	Applying properties M$\forall$, E11 and E2, we obtain $\forall  c \leq \exists  a \to \exists  y$, and then
	$$\exists  a \leq \forall  c \to \exists  y =1 \to \exists  y = \exists  y = b.$$
	We can conclude $\exists  a = \min\{b \in B: c\ast a \leq b\}.$
	
\end{proof}

According to Theorem \ref{subalgebra}, if $\langle \textbf{A}, \forall , \exists \rangle$ is an EBL-algebra, then $\exists A= \forall  A$ is a BL-subalgebra of $ \textbf{A}$.
We are going to show under which conditions an epistemic BL-algebra can be defined from a BL-algebra $ \textbf{A}$  and one of its subalgebras  $\textbf{B}.$

\begin{definition}\label{def c-rel-comp subal}
	Let {\bf A} be a BL-algebra, {\bf B} a subalgebra of \textbf{A} and $c\in A$. We say that the pair $(B, c)$ is a  \emph{$c$-relatively complete subalgebra}, if the following conditions hold:
	\begin{description}
		\item[(e1)] For every $a \in A$, the subset $\{b \in B : b \leq c \to a\}$ has a greatest element and the subset $\{b \in B : c \ast a \leq b\}$ has a least element.
		\item[(e2)]  $\{a \in A: c^2 \leq a \} \cap B = \{1\}$.
	\end{description}
\end{definition}

\begin{theorem} \label{teo c-rel-completa}
	Given a \textsc{BL}-algebra \textbf{A} and a $c$-relatively complete subalgebra $(\textbf{B} ,c)$, if we define on \textbf{A} the operations:
	\begin{equation}\label{def forall sobre c}
	\forall  a := \max\{b \in B : b \leq c \to a\},
	\end{equation}
	\begin{equation}\label{def exists sobre c}
	\exists  a := \min\{b \in B : c \ast a \leq b\},
	\end{equation}
	then $\langle \textbf{A}, \forall , \exists \rangle$ is a c-EBL-algebra such that $\forall  A = \exists A = B$. Conversely, if \textbf{A} is a c-EBL-algebra, then $(\forall  A, c)$ is a $c$-relatively complete subalgebra of \textbf{A}.
\end{theorem}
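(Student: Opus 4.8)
The plan is to prove the two implications separately, reading off from the definitions (\ref{def forall sobre c}) and (\ref{def exists sobre c}) the two ``adjunction-like'' facts that $c \ast \forall a \le a$ and $c \ast a \le \exists a$ (equivalently $\forall a \le c \to a$ and $a \le c \to \exists a$), together with maximality/minimality: $\forall a$ is the largest element of $B$ below $c \to a$, and $\exists a$ the smallest element of $B$ above $c \ast a$. Condition (e1) guarantees these extrema exist, so $\forall$ and $\exists$ are genuine unary operations with $\forall A \subseteq B$ and $\exists A \subseteq B$.

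First I would show $\forall A = \exists A = B$. Since $c \le 1$ we have $c \ast b \le b$, so every $b \in B$ lies in both defining sets, giving $b \le \forall b$ and $\exists b \le b$. For the reverse inequalities I would use (e2): the element $d := \forall b \to b$ lies in $B$, and $\forall b \le c \to b$ gives $c \le d$, whence $c^2 \le c \le d$; by (e2) then $d = 1$, i.e. $\forall b \le b$. The symmetric argument with $b \to \exists b$ gives $b \le \exists b$. Hence $\forall b = \exists b = b$ on $B$, so both images equal $B$ and, moreover, $\forall$ and $\exists$ restrict to the identity on $B$.

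Next I would verify the identities of Definition \ref{def epist alg} by the uniform device: to compute $\forall(\cdot)$ as a maximum (resp. $\exists(\cdot)$ as a minimum), exhibit the claimed value inside the defining set and show every member of that set is bounded by it. E$\forall$ and E$\exists$ are immediate; E1 follows by applying (e2) to $\forall a \to \exists a \in B$ after checking $c^2 \ast \forall a \le \exists a$; E4a and E4b reduce to (\ref{eq:4}) and to distribution of $\ast$ over $\vee$, since the defining sets of $\forall(a \wedge b)$ and $\exists(a \vee b)$ are the intersections of the corresponding sets; and E4 is automatic because $\exists a \in B$ forces $\forall \exists a = \exists a$. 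I expect E2, E3 and E5 to be the main obstacle, since they require genuine residuation juggling together with closure of $B$ under $\to$ and $\ast$. For E2 I would rewrite $c \to (a \to \forall b) = (c \ast a) \to \forall b$ by (\ref{eq:3}) and check that $\exists a \to \forall b$ is the maximal member of $B$ below it, using $c \ast a \le \exists a$ in one direction and, in the other, pushing $c \ast a \le b' \to \forall b$ through the minimality of $\exists a$; E3 is treated identically after writing $c \to (\forall a \to b) = \forall a \to (c \to b)$; and E5 by noting $c \ast a \ast \exists b \le \exists a \ast \exists b \in B$ for one inclusion, while $c \ast a \le \exists b \to \exists(a \ast \exists b)$ feeds the minimality of $\exists a$ for the other. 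Finally, $\forall c = 1$ and $\forall a = 1 \Rightarrow c \le a$, so $c = \min\{a \in A : \forall a = 1\}$ is the focal element and the structure is a c-EBL-algebra.

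For the converse, assume $\mathbf{A}$ is a c-EBL-algebra and set $B = \forall A = \exists A$, a subalgebra by Theorem \ref{subalgebra}. Condition (e1) is immediate from Theorem \ref{c-teorema}, which already presents $\forall a$ and $\exists a$ as the required greatest and least elements. For (e2) the neat point is that $\exists c = 1$: indeed $\forall c = 1$ because $c$ is the focal element, and E1 gives $\forall c \le \exists c$. But Theorem \ref{c-teorema} also yields $\exists c = \min\{b \in B : c^2 \le b\}$, so this minimum equals the top element $1$; since every element of the set is $\ge$ this minimum, the set collapses to $\{1\}$, which is exactly (e2).
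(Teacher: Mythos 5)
Your proposal is correct and follows essentially the same route as the paper: existence of the extrema from (e1), axiom-by-axiom verification via residuation and the max/min characterizations, the focal-element check, and the converse via Theorem \ref{c-teorema} together with $\exists c = 1$. The only (harmless) differences are organizational — you establish $\forall b = \exists b = b$ on $B$ up front by invoking (e2) directly (the paper derives the same facts later via $\exists c = 1$), which lets you get E4 for free.
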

\begin{proof}
	Clearly condition (e1)  guarantees the existence of $\forall  a$ and $\exists  a$ for every $a \in A$.
	It remains to show that $\langle A, \forall , \exists  \rangle$ satisfies Definition \ref{def epist alg}. Let $a,b \in A$:
	\begin{description}
		\item[(E$\forall $)] It is clear that $\forall  1 = \max\{b \in B : b \leq c \to 1\} = 1$ because $B$ is subalgebra.
		\item[(E$\exists $)] In the same sense, clearly, $\exists  0 = \min\{b \in B : c \ast 0 \leq b\} = 0$.
		\item[(E1)] First note that, by (\ref{def exists sobre c}) and (e2), we have $\exists  c = 1$. Let us check that $\forall  a \leq \exists  a$.
		By definition, we have $\forall  a \leq c \to a$. Then $c \ast \forall  a  \leq a$ and $c^2 \ast \forall  a \leq c \ast a$. As $c \ast a \leq \exists  a$, we get $c^2 \ast \forall  a \leq \exists  a.$ By residuation, $$c^2 \leq \forall  a \to \exists  a.$$
		Besides, as $\exists  c = \min \{b \in B : c^2 \leq b\}= 1$ and  $B$ is subalgebra ($\forall  a \to \exists  a \in B$), we obtain $$1=\exists  c \leq \forall  a \to \exists  a,$$
		and $\forall  a \to \exists  a = 1$ as desired.
		\item[(E2)] From $a \ast c \leq \exists  a$, we get
		$$\exists  a \to \forall  b \leq (a \ast c) \to \forall  b = c \to (a \to \forall  b).$$
		Since $\exists  a \to \forall  b \in B$, we have $$\exists  a \to \forall  b \leq \forall  (a \to \forall  b). $$
		On the other hand, we know that $\forall (a \to \forall  b)\leq c \to (a \to \forall  b)= (a \ast c) \to \forall  b$, thus, by residuation
		$$a \ast c \leq \forall (a \to \forall  b) \to \forall  b.$$
		Then, taking into account (\ref{def exists sobre c}), we obtain $\exists  a \leq \forall (a \to \forall  b) \to \forall  b$, so
		$$ \forall (a \to \forall  b) \leq \exists  a \to \forall  b.$$
		
		\item[(E3)]Again by definition, $\forall  (\forall  a \to b) \leq c \to (\forall  a \to b)= \forall  a \to (c \to b)$, or equivalently,
		$$\forall  (\forall  a \to b) \ast \forall  a \leq c \to b$$
		By (\ref{def forall sobre c}), we obtain $$\forall  (\forall  a \to b) \leq \forall  a \to \forall  b.$$
		Moreover, $\forall  b \leq c \to b$ implies $$\forall  a \to \forall  b \leq \forall  a \to (c \to b)=c \to (\forall  a \to b),$$
		from where, by definition of $\forall (\forall  a \to b)$, we get
		$$\forall  a \to \forall  b \leq \forall (\forall  a \to b).$$
		
		\item[(E4)] We know that $\exists  a \leq c \to \exists  a$ and $\exists  a \in B$, therefore $\exists  a \leq \forall  \exists  a$, i.e., $\exists  a \to \forall  \exists  a =1.$
		\item[(E4a)] Note that $\forall (a \wedge b)\leq c \to (a \wedge b)= (c \to a) \wedge (c \to b)$, then
		$$\forall (a \wedge b) \leq c \to a \hspace{1.5cm}\forall (a \wedge b) \leq c \to b $$
		As consequence,
		$$\forall (a \wedge b) \leq \forall  a \hspace{1.5cm}\forall (a \wedge b) \leq \forall  b, $$
		and hence $$\forall (a \wedge b)\leq \forall  a \wedge \forall  b.$$
		On the other hand, $\forall  a \wedge \forall  b \leq \forall  a \leq c \to  a$ and $\forall  a \wedge \forall  b \leq \forall  b \leq  c \to  b$.
		Thus, $\forall  a \wedge \forall  b  \leq (c \to  a) \wedge (c \to b)= c \to (a \wedge b)$.\\
		Since $\forall  (a \wedge b) = \max\{a \in B: a \leq c \to (a \wedge b)\}$, we obtain $$\forall  a \wedge \forall  b \leq \forall  (a \wedge b).$$
		
		\item[(E4b)]The proof of this case is analogous to the previous one.
		\item[(E5)] By definition $c \ast (a \ast \exists  b)\leq \exists (a \ast \exists  b)$, then by residuation $c \ast a \leq \exists  b \to \exists (a \ast \exists  b)$. So $$\exists  a \leq \exists  b \to \exists (a \ast \exists  b),$$ or equivalently $\exists  a \ast \exists  b \leq \exists (a \ast \exists  b).$
		Since $a \ast c \leq \exists  a$, we have $(a \ast c)\ast \exists  b \leq \exists  a \ast \exists  b$, therefore
		$$\exists (a \ast \exists  b) \leq \exists  a \ast \exists  b.$$
	\end{description}
	Thus $\langle \textbf{A}, \forall , \exists \rangle$ is an epistemic BL-algebra.
\smallskip

	We now verify that $c$ is the focal element of  $\langle \textbf{A}, \forall , \exists \rangle.$ To that aim, let $a$ be an element of $\{a \in A: \forall a=1\}$. Then from  (\ref{def forall sobre c}), we get $1=\forall a \leq c \to a$ and $c \leq a$. Besides, $\forall c=\max\{b \in B: b \leq c \to c\}=1$. Therefore, $c=\min\{a \in A: \forall a=1\}$.\\
	Let us see now that $\forall  A = \exists  A = B$. By the previous and Theorem \ref{subalgebra} the first equality is satisfied. On the other hand, it's clear that $\exists  A \subseteq B$. Furthermore, for all $b \in B$, $c \ast b \leq b $, whereby $\exists  b \leq b$. Besides $c^2 \ast b \leq c \ast b \leq \exists  b$, then, by residuation, $c^2 \leq b \to \exists  b$ but as $b , \exists  b \in B$ and $B$ is subalgebra it follows that $b \to \exists  b \in B$ and is greater than $c^2$, thus $1 = \exists  c \leq b \to \exists  b$. Consequently $b \leq \exists  b$ and hence $B \subseteq \exists  A$.\\
	
	Conversely, let $\langle \textbf{A}, \forall , \exists \rangle$ be a c-EBL-algebra. From Theorem \ref{subalgebra}, we know that $\exists  \mathbf{A}$ is a BL-subalgebra of $ \textbf{A}$. Let us now show that conditions (e1) and (e2) hold.
	\begin{description}
		\item[e1] By Theorem \ref{c-teorema} $\forall  a = \max\{b \in \forall  A: b \leq c \to a\}$ and $\exists  a = \min \{b \in \forall  A: c \ast a \leq b\}$.
		\item[e2]  $\{a \in A: c^2\leq a\}\cap \forall A =\{a \in \forall A: c^2\leq a\}$. By Theorem \ref{c-teorema}, the set $\{a \in \forall A: c^2\leq a\}$ has a least element and it is  $\exists  c$. Therefore, $\{a \in \forall A: c^2\leq a\}=\{1\}$.
	\end{description}
\end{proof}

\section{H\'{a}jek's fuzzy modal logic $KD45(\mathcal{C})$}
\label{sec:6}

In \cite{Hajeketal1995}, the authors define a modal logic to reason about possibility and necessity degrees of many-valued propositions. The fuzzy modal belief logic, that they call  $KD45(\mathcal{C})$, is defined as the set of valid formulas in the class of $\mathcal{C}$-possibilistic frames. This logic is a generalization of the so-called {\it Possibilistic Logic} \cite{Duboisetal1,Duboisetal2} a well-known uncertainty logic to reasoning with graded belief on classical propositions by means of necessity and possibility  measures. In his book, H\'{a}jek leaves as an open problem to find an elegant axiomatization for $KD45(\mathcal{C})$ with models over BL-algebras  $\mathcal{C}$ \cite[Chapter 8, pages 227-228]{HajekBook98}. Some attempts to solve the open problem have been done, considering that the most natural semantics of fuzzy $KD45$ is a fuzzy version  of possibilistic frames (closely related to the Kripke-style semantics),  see for example \cite{BoEsGoRo11} and the reference therein.
Our aim is to give a more general characterization of the logical system by our algebraic approach. Nevertheless, we establish a connection between our Epistemic BL-algebras and BL-possibilistic frames, trying to be closer to an answer to H\'{a}jek's open question.


\subsection{Complex algebras}
\label{sec:7}

Given a complete \textsc{BL}-algebra ${\bf A}$, a \emph{possibilistic} ${\bf A}$\emph{frame} ($\Pi {\bf A}$-frame) is a
structure $\langle W,\pi \rangle $ where $W$ is a non-empty set of worlds, and $\pi:W \to {\bf A}$ (i.e. $\pi \in {\bf A}^{W}$) is a normalized possibility distribution over $W$, i.e., $\sup_{w \in W} \pi(w) = 1$.\\
For a fix $\Pi {\bf A}$-frame $\mathcal{P}=\langle W, \pi \rangle$, considering the BL-algebra ${\bf A}^{W}$, of functions from $W$ to ${\bf A}$ and operations $(\ast, \to, \wedge, \vee, 0,1)$ defined pointwise. For each element $f \in A^{W}$ let:
\begin{equation}\label{poss-epis1}
\forall ^{\mathcal{P}}(f) =  \inf_{w \in W}\{\pi(w) \to f(w) \},
\end{equation}
\begin{equation}\label{poss-epis2}
\exists ^{\mathcal{P}}(f)  = \sup_{w \in W}\{ \pi(w) \ast f(w)\}.
\end{equation}
Observe that these operators are well-defined, since the algebra ${\bf A}$ is complete. The system $\langle {\bf A}^{W}, \forall ^{\mathcal{P}}, \exists ^{\mathcal{P}}\rangle$ will be called the \textit{complex ${\bf A}$-algebra} associated with the $\Pi {\bf A}$-frame $\mathcal{P}=\langle W, \pi \rangle$.

We shall prove that $\langle {\bf A}^{W}, \forall ^{\mathcal{P}}, \exists ^{\mathcal{P}} \rangle$ is an EBL-algebra. To achieve so, we need the next auxiliary result.

\begin{lemma} \label{existsc=1}
	$\exists ^{\mathcal{P}}(\pi)=1$, i.e., $\sup_{w \in W}\{\pi(w)^2\}=1$. 
\end{lemma}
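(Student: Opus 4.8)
The plan is to reduce the statement to two facts: that $\ast$ distributes over arbitrary suprema in a complete residuated lattice, and a small inequality that passes from BL-chains to all BL-algebras. Since $\exists^{\mathcal{P}}(\pi)=\sup_{w\in W}\{\pi(w)\ast\pi(w)\}=\sup_{w\in W}\pi(w)^2$ and every $\pi(w)^2\le 1$, only the inequality $\sup_{w\in W}\pi(w)^2\ge 1$ needs proof.

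First I would establish the auxiliary inequality $a\ast b\le a^2\vee b^2$, valid in every BL-algebra. In a BL-chain the elements $a,b$ are comparable, say $a\le b$, so monotonicity of $\ast$ gives $a\ast b\le b\ast b=b^2\le a^2\vee b^2$; rephrasing this as the equation $(a\ast b)\to(a^2\vee b^2)=1$ via (\ref{eq:orden}), it holds in every BL-chain, and hence in $\mathbf{A}$ because every BL-algebra is a subdirect product of BL-chains. In particular, for all $v,w\in W$ we get $\pi(v)\ast\pi(w)\le \pi(v)^2\vee\pi(w)^2\le \sup_{u\in W}\pi(u)^2$.

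Second I would exploit residuation. As $\mathbf{A}$ is complete, the map $\pi(v)\ast(-)$ is a left adjoint (its right adjoint being $\pi(v)\to(-)$ by (\ref{eq:residuation})) and therefore preserves all suprema. Combining this with the normalization $\sup_{w\in W}\pi(w)=1$ yields, for each fixed $v$, $\pi(v)=\pi(v)\ast 1=\pi(v)\ast\sup_{w\in W}\pi(w)=\sup_{w\in W}(\pi(v)\ast\pi(w))\le \sup_{u\in W}\pi(u)^2$. Taking the supremum over $v$ and using normalization once more gives $1=\sup_{v\in W}\pi(v)\le\sup_{u\in W}\pi(u)^2$, so $\exists^{\mathcal{P}}(\pi)=\sup_{u\in W}\pi(u)^2=1$, as required.

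The only genuinely delicate step is the transfer of $a\ast b\le a^2\vee b^2$ from chains to the possibly non-linearly ordered $\mathbf{A}$, which rests on the subdirect representation of BL-algebras by chains; the rest is routine once one observes that the cross terms $\pi(v)\ast\pi(w)$ must be controlled by the diagonal terms $\pi(u)^2$. An equivalent formulation bypasses fixing $v$: preservation of suprema gives $\sup_{v,w}(\pi(v)\ast\pi(w))=\sup_{v}(\pi(v)\ast\sup_{w}\pi(w))=\sup_{v}\pi(v)=1$, and the chain inequality bounds this double supremum above by $\sup_{u\in W}\pi(u)^2$.
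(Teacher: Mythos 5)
Your proof is correct and follows essentially the same route as the paper's: the crux in both is the inequality $a\ast b\le a^2\vee b^2$, verified on BL-chains and transferred to all BL-algebras, which bounds the cross terms $\pi(v)\ast\pi(w)$ by $\sup_{u\in W}\pi(u)^2$. The only difference is cosmetic: you invoke preservation of suprema by $\pi(v)\ast(-)$ as an adjointness fact, whereas the paper unrolls exactly that argument by applying residuation twice to pass from $\pi(w)\ast\pi(\nu)\le\sup_{u}\pi(u)^2$ to $\sup_{w}\pi(w)\ast\sup_{\nu}\pi(\nu)\le\sup_{u}\pi(u)^2$.
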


\begin{proof}
	The result of the lemma will follow from the fact that $\sup_{w \in W}\{\pi(w)\}=1$ and the next equality:  $$\sup_{w \in W}\{\pi(w)^2\} =  \sup_{w \in W}\{\pi(w)\} \ast  \sup_{w \in W}\{\pi(w)\}.$$
	Then we only need to check that the equality holds in complete BL-algebras. We divide the proof in two cases:
	\begin{description}
	\item [$\leq)$]
	Clearly for all $w \in W,$ $\pi(w)  \leq  \sup_{w \in W}\{\pi(w)\}$. Then for every $\in W: $ $$\pi(w) \ast \pi(w)  \leq  \sup_{w \in W}\{\pi(w)\} \ast  \sup_{w \in W}\{\pi(w)\}.$$ Taking supremum on the left side, we obtain the one inequality.
	\item[$\geq)$] For the other, it is easy to see that the equation $x\ast y\to (x^2 \vee y^2)=1$ holds in every BL-chain, thus it holds in every BL-algebra. Then for all $w, \nu \in W$:
	\begin{align*}
		\pi(w) \ast \pi(\nu) &\leq \pi(w)^2 \vee  \pi(\nu)^2\\
		& \leq  \sup_{w \in W}\{\pi(w)^2\} \vee \sup_{\nu \in W}\{\pi(\nu)^2\}\\
		&= \sup_{w \in W}\{\pi(w)^2\}.
	\end{align*}

	Using residuation we have:
	$$\pi(w) \leq \pi(\nu) \to \sup_{w \in W}\{\pi(w)^2\} $$
	Considering the supremum on the left hand side:
	$$\sup_{w \in W} \{ \pi(w)\} \leq \pi(\nu) \to \sup_{w \in W}\{\pi(w)^2\} $$
	and interchanging the left hand side and the antecedent of the right implication, we obtain:
	$$\pi(\nu) \leq \sup_{w \in W} \{ \pi(w)\} \to \sup_{w \in W}\{\pi(w)^2\}. $$
	Taking supremum we get:
	$$\sup_{\nu \in W}\{\pi(\nu)\} \leq \sup_{w \in W} \{ \pi(w)\} \to \sup_{w \in W}\{\pi(w)^2\}. $$
	The proof is completed by using residuation again.
	\end{description}

\end{proof}

Then for a complete BL-algebra ${\bf A}$, as promised before, we have:

\begin{theorem} \label{complex}
	Given a $\Pi{\bf A}$-frame ${\cal P}=\langle W,\pi \rangle$ over a complete BL-algebra $\bf{A}$, its associated complex ${\bf A}$-algebra $\langle {\bf A}^{W},\forall ^{\mathcal{P}}, \exists ^{\mathcal{P}} \rangle$ is a c-EBL-algebra with $c=\pi$.
\end{theorem}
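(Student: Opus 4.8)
The plan is to avoid verifying the nine \textsc{EBL}-axioms directly and instead deduce everything from Theorem \ref{teo c-rel-completa}. Recall that $\forall^{\mathcal{P}}(f)$ and $\exists^{\mathcal{P}}(f)$ are elements of ${\bf A}$, read inside ${\bf A}^W$ as constant functions. So I would take $B$ to be the subalgebra of ${\bf A}^W$ consisting of all constant functions --- it is a subalgebra because the pointwise operations preserve constancy, and it is isomorphic to ${\bf A}$ --- and set $c=\pi$. It then suffices to show that $(B,\pi)$ is a $c$-relatively complete subalgebra and that the operators it induces via (\ref{def forall sobre c}) and (\ref{def exists sobre c}) are precisely $\forall^{\mathcal{P}}$ and $\exists^{\mathcal{P}}$; Theorem \ref{teo c-rel-completa} then delivers at once that $\langle {\bf A}^W,\forall^{\mathcal{P}},\exists^{\mathcal{P}}\rangle$ is a c-EBL-algebra whose focal element is $\pi$.

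First I would check the coincidence of the operators together with condition (e1). Writing $\overline{b}$ for the constant function with value $b$, the inequality $\overline{b}\le \pi\to f$ holds in ${\bf A}^W$ iff $b\le \pi(w)\to f(w)$ for every $w$, i.e.\ iff $b\le \inf_{w}(\pi(w)\to f(w))$; since ${\bf A}$ is complete this infimum exists in ${\bf A}$, so $\{b\in B: \overline{b}\le\pi\to f\}$ has greatest element $\inf_{w}(\pi(w)\to f(w))=\forall^{\mathcal{P}}(f)$. Dually, $\pi\ast f\le\overline{b}$ iff $\sup_{w}(\pi(w)\ast f(w))\le b$, so $\{b\in B: \pi\ast f\le \overline{b}\}$ has least element $\sup_{w}(\pi(w)\ast f(w))=\exists^{\mathcal{P}}(f)$. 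These two computations simultaneously verify (e1) and show that the induced operators are exactly $\forall^{\mathcal{P}}$ and $\exists^{\mathcal{P}}$.

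The remaining and decisive step is condition (e2), namely $\{f\in A^W: \pi^2\le f\}\cap B=\{1\}$. For a constant $\overline{b}$ one has $\pi^2\le\overline{b}$ iff $\pi(w)^2\le b$ for all $w$, i.e.\ iff $\sup_{w}\pi(w)^2\le b$. This is exactly where the normalization of $\pi$ is used in an essential way: by Lemma \ref{existsc=1} we have $\sup_{w}\pi(w)^2=1$, forcing $b=1$, so the intersection is $\{\overline{1}\}=\{1\}$. I expect this to be the main obstacle, since it is the only point at which normalization plays a genuine (rather than purely formal) role, routed through the BL-chain identity $x\ast y\to(x^2\vee y^2)=1$ that underlies Lemma \ref{existsc=1}. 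With (e1), (e2) and the identification of the operators in hand, Theorem \ref{teo c-rel-completa} completes the proof.
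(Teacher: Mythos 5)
Your proof is correct, and it takes a genuinely different route from the paper's. The paper verifies each of the axioms E$\forall$--E5 for $\forall^{\mathcal{P}}$ and $\exists^{\mathcal{P}}$ directly, by infimum/supremum manipulations in ${\bf A}^W$, and then checks separately that $\pi$ is the focal element; Lemma \ref{existsc=1} enters there exactly once, in the verification of E1. You instead factor everything through Theorem \ref{teo c-rel-completa} with $B=A^*$ (the constant maps) and $c=\pi$: your two adjunction-style computations establish (e1) and simultaneously identify the induced operators (\ref{def forall sobre c})--(\ref{def exists sobre c}) with $\forall^{\mathcal{P}}$ and $\exists^{\mathcal{P}}$, and Lemma \ref{existsc=1} is used exactly once to force (e2). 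The two uses of that lemma are the same normalization argument in different clothing, since inside the proof of Theorem \ref{teo c-rel-completa} it is precisely (e2) that yields E1 via $c^2\le \forall a\to\exists a$. What your route buys is economy (no re-verification of nine axioms) together with the bonus conclusion $\forall^{\mathcal{P}}{\bf A}^W=\exists^{\mathcal{P}}{\bf A}^W=A^*$, which comes for free from Theorem \ref{teo c-rel-completa} for any complete ${\bf A}$, whereas the paper only establishes the inclusion $A^*\subseteq\forall^{\mathcal{P}}{\bf A}^W$ later, for BL-chains, via Lemma \ref{Luk God P w, b}. What the paper's direct verification buys is independence from the (long) proof of Theorem \ref{teo c-rel-completa} and explicit pointwise identities that are reused afterwards. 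The only detail worth making explicit in your argument is that the constant map with value $\inf_{w}(\pi(w)\to f(w))$ itself lies in $\{b\in B:\overline{b}\le\pi\to f\}$ (it does, an infimum being a lower bound), so the greatest element is genuinely attained rather than being a mere supremum; the dual remark applies to the minimum.
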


\begin{proof}
	Since ${\bf A}^{W}$ is a BL-algebra, we have to prove that the system $ \langle {\bf A}^{W}, \forall ^{\mathcal{P}}, \exists ^{\mathcal{P}} \rangle$  satisfies axioms E$\forall $ - E5. Let $f$ and $g$ $\in {\bf A}^{W}$:
	\begin{description}
		\item[(E$\forall $)]
		$\forall ^{\mathcal{P}} 1  =  \inf_{w\in W}\{\pi(w) \to 1(w) \} = 1$.
		\item[(E$\exists $)]
		$\exists ^{\mathcal{P}} 0  =  \sup_{w\in W}\{ \pi(w) \ast 0(w)\} = 0$.
		\item[(E1)] To see that $\forall ^{\mathcal{P}}f \to \exists  ^{\mathcal{P}}f =1$ consider the following cases:
		
		-There exists $w' \in W$ such that $\pi(w')=1$, then:\\
		$\forall ^{\mathcal{P}}f =\inf_{w \in W}\{\pi(w) \to f(w)\}\leq \pi(w') \to f(w')= f(w')=1 \ast f(w') = \pi(w') \ast f(w')\leq \sup_{w \in W}\{\pi(w)\ast f(w)\} = \exists ^{\mathcal{P}}f$
		
		- For all $w \in W \ $,  $\pi(w)< 1$. Observe that for every $w \in W$ one has $$\forall ^{\mathcal{P}}(f)(w)  =  \inf_{w\in W}\{\pi(w) \to f(w) \} \leq (\pi \to f)(w).$$ Thus $\forall ^{\mathcal{P}} f \leq \pi \to f.$ This fact together with residuation imply that for every $w \in W$, $$\forall ^{\mathcal{P}}f \ast \pi(w) \leq f(w).$$ By monotonicity of $\ast$, for all $w \in W$ we have $$\forall ^{\mathcal{P}}f \ast \pi(w)^ 2 \leq \pi\ast f (w).$$ Then for every  $w \in W$, $$\forall ^{\mathcal{P}}f \ast \pi(w)^ 2\le \pi\ast f(w) \leq \sup_{w\in W}\{ \pi(w) \ast f(w)\} = \exists ^{\mathcal{P}} f.$$
		Consequently for all $w\in W$,   $\ \pi(w)^2 \leq \forall ^{\mathcal{P}}f \to \exists ^{\mathcal{P}}f$
		and by Lemma \ref{existsc=1}  we get
		$\forall ^{\mathcal{P}}f \leq \exists ^{\mathcal{P}}f$.
		\item[(E2)]
		By definition:
		   \begin{align*}
		\forall ^{\mathcal{P}}(f \to \forall ^{\mathcal{P}} g)&=\inf_{w \in W}\{\pi(w) \to (f(w) \to \forall ^{\mathcal{P}} g(w))\}\\
		&=\inf_{w \in W}\{(\pi(w) \ast f(w)) \to \forall ^{\mathcal{P}} g(w)\}.
			\end{align*}
		Observe that for every $w \in W$, one has $$\exists ^{\mathcal{P}}(f)(w)=\sup_{w \in W}\{\pi(w) \ast f(w)\}\geq (\pi \ast f)(w).$$
		Thus $\pi \ast f \leq \exists ^{\mathcal{P}}f$, and for all $w \in W$:
		$$\exists ^{\mathcal{P}} f \to \forall ^{\mathcal{P}} g\leq (\pi(w) \ast f(w)) \to \forall ^{\mathcal{P}} g(w).$$
		This means that $\exists ^{\mathcal{P}} f \to \forall ^{\mathcal{P}} g$  is a lower bound of the set $\{(\pi(w) \ast f(w)) \to \forall ^{\mathcal{P}} g(w)\}_{w \in W}$.  Therefore 				 $$\exists ^{\mathcal{P}} f \to \forall ^{\mathcal{P}} g \leq \forall ^{\mathcal{P}} (f \to \forall ^{\mathcal{P}} g).$$
	
		Again, by definition, for all $w \in W$ $$\forall ^{\mathcal{P}} (f \to \forall ^{\mathcal{P}} g) \leq (\pi(w)\ast f(w))\to \forall ^{\mathcal{P}} g,$$ and residuation implies  $$\pi(w) \ast f(w) \leq \forall ^{\mathcal{P}}(f \to \forall ^{\mathcal{P}} g) \to \forall ^{\mathcal{P}} g$$  for every $w \in W$. Consequently, $\exists ^{\mathcal{P}} f \leq \forall ^{\mathcal{P}}(f \to \forall ^{\mathcal{P}} g) \to \forall ^{\mathcal{P}} g $, or equivalently,
		$$\forall ^{\mathcal{P}}(f \to \forall ^{\mathcal{P}} g) \leq \exists ^{\mathcal{P}} f \to \forall ^{\mathcal{P}} g.$$
	
		Finally, $\forall ^{\mathcal{P}}(f \to \forall ^{\mathcal{P}} g) = \exists ^{\mathcal{P}} f \to \forall ^{\mathcal{P}} g$.
		\item[(E3)]
		For every $w \in W$,
		\begin{align*}
				\forall ^{\mathcal{P}}(\forall^{\mathcal{P}} f \to g) &=\inf_{w \in W}\{\pi(w) \to (\forall^{\mathcal{P}} f \to g(w))\} \\
				& =\inf_{w \in W}\{\forall^{\mathcal{P}} f \to (\pi(w)\to g(w))\} \\
				& \leq \forall^{\mathcal{P}} f \to (\pi(w)\to g(w)).
			\end{align*}
		Then, for all $w \in W$, $\forall^{\mathcal{P}} f \ast \forall^{\mathcal{P}} (\forall^{\mathcal{P}} f \to g) \leq \pi(w) \to g(w)$.  Hence, $\forall^{\mathcal{P}} f \ast \forall^{\mathcal{P}} (\forall^{\mathcal{P}} f \to g)$ is a lower bound of $\{\pi(w) \to g(w)\}_{w  \in W}$, and therefore $$\forall^{\mathcal{P}} f \ast \forall^{\mathcal{P}} (\forall^{\mathcal{P}} f \to g) \leq \forall^{\mathcal{P}} g$$ or equivalently
	
		$$\forall^{\mathcal{P}} (\forall^{\mathcal{P}} f \to g) \leq \forall ^{\mathcal{P}}f \to \forall ^{\mathcal{P}}g.$$
		
		For the other inequality, as $\forall^{\mathcal{P}} g \leq \pi(w) \to g(w)$, we have
		\begin{align*}
			\forall^{\mathcal{P}} f \to \forall^{\mathcal{P}} g &\leq \forall^{\mathcal{P}} f \to (\pi\to g)(w)\\
			&= \pi(w)\to (\forall^{\mathcal{P}}f \to g)(w)
		\end{align*}
	
		for all $w \in W$. Hence
		$$\forall ^{\mathcal{P}} f \to \forall ^{\mathcal{P}} g \leq \forall ^{\mathcal{P}}(\forall ^{\mathcal{P}} f \to g).$$
				Finally, we have $\forall ^{\mathcal{P}}(\forall ^{\mathcal{P}} f \to g) = \forall ^{\mathcal{P}} f \to \forall ^{\mathcal{P}} g $.
		\item[(E4)]
		For every $w \in W$, $$\exists ^{\mathcal{P}} f \leq \pi(w) \to \exists ^{\mathcal{P}} f.$$ Then $\exists ^{\mathcal{P}} f$ is a lower bound of $\{c(w) \to \exists ^{\mathcal{P}} f\}_{w \in W}$, therefore $$\exists  ^{\mathcal{P}}f \leq \inf_{w \in W}\{\pi(w) \to \exists ^{\mathcal{P}} f\}= \forall ^{\mathcal{P}} \exists  ^{\mathcal{P}}f.$$
		Thus $$\exists  ^{\mathcal{P}}f \to \forall ^{\mathcal{P}} \exists  ^{\mathcal{P}}f = 1.$$
		
		\item[(E4a)] For all $w \in W$,
		\begin{align*}
			\forall ^{\mathcal{P}}(f \wedge g)& =\inf_{w \in W}\{\pi(w) \to (f \wedge g)(w)\}\\
		& \leq \pi(w) \to (f(w) \wedge g(w))\\
		& =(\pi(w) \to f(w)) \wedge (\pi(w) \to g(w)).
		\end{align*}
		
		The right side of the last identity is less than both $\pi(w) \to f(w)$ and $\pi(w) \to g(w)$ for all $w \in W$, then $$\forall ^{\mathcal{P}}(f \wedge g)\leq \forall ^{\mathcal{P}} f \hspace{0.3cm} \mbox{and} \hspace{0.3cm}\forall ^{\mathcal{P}}( f \wedge g)\leq \forall ^{\mathcal{P}} g.$$
		Hence
	
		$$\forall ^{\mathcal{P}}(f \wedge g)\leq \forall ^{\mathcal{P}} f \wedge \forall^{\mathcal{P}} g.$$
	
		On the other hand, for every $w \in W$, $\forall ^{\mathcal{P}} f \wedge \forall^{\mathcal{P}} g \leq \pi(w) \to f(w)$ and similary $\forall ^{\mathcal{P}} f \wedge \forall ^{\mathcal{P}} g \leq \pi(w) \to g(w)$ , therefore $$\forall^{\mathcal{P}}f \wedge \forall^{\mathcal{P}} g \leq (\pi\to f) \wedge (\pi \to g)(w)= \pi\to (f \wedge g)(w)$$ for all $w \in W$. Consequently
	
		$$\forall ^{\mathcal{P}}f \wedge \forall ^{\mathcal{P}} g \leq \forall ^{\mathcal{P}}(f \wedge g).$$
		
		So, $\forall ^{\mathcal{P}}(f \wedge g) = \forall ^{\mathcal{P}}f \wedge \forall ^{\mathcal{P}} g$.
		
		\item[(E4b)] 
		For every $w \in W$, $\pi(w) \ast f(w) \leq \exists ^{\mathcal{P}} f$ and $\pi(w) \ast g(w)\leq \exists ^{\mathcal{P}} g$, thus
	\begin{align*}
				\exists ^{\mathcal{P}} f \vee \exists ^{\mathcal{P}} g & \geq (\pi(w) \ast f(w)) \vee (\pi(w) \ast g(w))\\
				& = \pi(w)\ast (f(w) \vee g(w)).
    \end{align*}
		As $\exists ^{\mathcal{P}}(f \vee g)= \sup_{w \in W}\{\pi(w)\ast (f(w) \vee g(w))\}$, we have
		$$\exists ^{\mathcal{P}}(f \vee g)\leq \exists ^{\mathcal{P}} f \vee \exists ^{\mathcal{P}} g.$$
		
		Moreover, for all $w \in W$, $f(w) \leq f(w) \vee g(w)$, whence $\pi(w) \ast f(w) \leq \pi(w) \ast (f(w) \vee g(w))$. Then $\sup_{w \in W}\{\pi(w) \ast f(w)\} \leq \sup_{w \in W}\{\pi(w) \ast (f(w) \vee g(w))\}$ i.e.,
		 $$\exists ^{\mathcal{P}} f \leq \exists ^{\mathcal{P}} (f \vee g).$$ Analogously, $$\exists ^{\mathcal{P}} g \leq \exists ^{\mathcal{P}} (f \vee g).$$
		Hence, $\exists ^{\mathcal{P}} f \vee \exists ^{\mathcal{P}} g \leq \exists ^{\mathcal{P}} (f \vee g)$.\\
	
		Finally, it holds $\exists ^{\mathcal{P}} (f \vee g)= \exists ^{\mathcal{P}} f \vee \exists ^{\mathcal{P}} g$.
			
		\item[(E5)]
		Since $\pi(w) \ast f(w) \leq \exists ^{\mathcal{P}} f$, for all $w \in W$, one has  $$\pi(w) \ast f(w) \ast \exists ^{\mathcal{P}} g \leq \exists ^{\mathcal{P}} f \ast \exists ^{\mathcal{P}} g.$$
		Since $\exists ^{\mathcal{P}} (f \ast \exists ^{\mathcal{P}} g)=\sup_{w \in W}\{\pi(w) \ast \left(f(w) \ast \exists ^{\mathcal{P}}g(w)\right)\}$, we have $
		\exists ^{\mathcal{P}} (f \ast \exists ^{\mathcal{P}} g) \leq \exists ^{\mathcal{P}} f \ast \exists ^{\mathcal{P}} g$.\\
		
		On the other hand, for all $w \in W$,
		\begin{align*}
				\exists ^{\mathcal{P}}(f \ast \exists ^{\mathcal{P}} g)& =\sup_{w \in W}\{\pi(w) \ast \left(f(w) \ast \exists ^{\mathcal{P}}(w)\right)\}\\
				& \geq\pi(w) \ast \left( f(w) \ast \exists ^{\mathcal{P}} g\right).
		\end{align*}

		Then, by residuation,
		$$\pi(w) \ast f(w)  \leq \exists ^{\mathcal{P}} g \to \exists ^{\mathcal{P}}\left( f \ast \exists ^{\mathcal{P}} g\right).$$
		Therefore $\exists ^{\mathcal{P}} f \leq \exists ^{\mathcal{P}} g \to \exists ^{\mathcal{P}}(f \ast \exists ^{\mathcal{P}} g)$, or equivalently
		
		$$	\exists ^{\mathcal{P}} f \ast \exists ^{\mathcal{P}} g \leq \exists ^{\mathcal{P}} (f \ast \exists ^{\mathcal{P}} g).$$
		Finally, we conclude $\exists ^{\mathcal{P}} (f \ast \exists ^{\mathcal{P}} g) = \exists ^{\mathcal{P}} f \ast \exists ^{\mathcal{P}} g.$
	\end{description}

	By the previous, $\langle{\bf A}^{W}, \forall ^{\mathcal{P}}, \exists ^{\mathcal{P}}\rangle$ is an EBL-algebra, that we will call \emph{complex EBL-algebra}. Moreover, $\pi$ is the focal element of the complex EBL-algebra. Indeed, if $f \in {\bf A}^W$ is such that $\forall^{\mathcal{P}} f=1$, then, by equation (\ref{poss-epis1}), $1=\inf_{w \in W} \{\pi(w) \to f(w)\}$, i.e. $$\pi(w) \leq f(w)$$
	for all $w \in W$. Thus $\pi \leq f$.\\
	 Besides, $\forall^{\mathcal{P}} \pi=\inf_{w \in W} \{\pi(w) \to \pi(w)\}=1$. Therefore, $\pi=\min\{f \in {\bf A}^W: \forall^{\mathcal{P}} f=1\}$ and hence $\langle{\bf A}^{W}, \forall ^{\mathcal{P}}, \exists ^{\mathcal{P}}\rangle$ is a $\pi$-EBL-algebra.

\end{proof}

\medskip

\subsection{Complex algebras defined over BL-chains}

In the attempt to establish a connection between complex c-EBL-algebras and possibilistic frames, we will show that, in the case of the complex c-EBL-algebra given by the Theorem \ref{complex}, if ${\bf A}$ is a BL-chain the subalgebra $\forall^{\mathcal{P}} {\bf A}^W$ coincides with the set of constant maps on $A$, which we will denote $A^*$. Since the images by the operators $\forall^{\mathcal{P}}$ and $\exists^\mathcal{P}$ are constant maps, then $$\forall^\mathcal{P} {\bf A}^W \subseteq A^*.$$  To show the other inclusion, we will need two technical lemmas that strongly depend on the structure of complete MV-chains and of BL-chains. We will try to make the proofs of these lemmas as self-contained as possible, and we refer the readers to  \cite{CDM,BuMon} for details.

\begin{lemma}\label{lemma:mvcompletas} Let ${\bf A}$ be a complete MV-chain. Then ${\bf A}$ is isomorphic to a subalgebra of the standard MV-chain $[0,1]_{\bf MV}.$
\end{lemma}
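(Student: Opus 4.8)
The plan is to show that completeness forces the chain to be Archimedean, and then to invoke the classical representation of Archimedean MV-chains. Recall that an MV-chain embeds into $[0,1]_{\mathbf{MV}}$ exactly when it is semisimple, equivalently when it contains no nonzero infinitesimal, i.e. no $0<a$ whose finite \L ukasiewicz sums $a,\ a\oplus a,\ a\oplus a\oplus a,\dots$ (with $x\oplus y=\neg(\neg x\ast\neg y)$) all stay strictly below $1$. The representation of semisimple MV-chains as subalgebras of $[0,1]_{\mathbf{MV}}$ is available in \cite{CDM}, so the entire content of the lemma is to rule out nonzero infinitesimals using completeness.

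First I would pass to the Mundici correspondence $\mathbf{A}\cong\Gamma(G,u)$, where $G$ is a totally ordered abelian group and $u$ a strong unit, so that $A=[0,u]$ and the lattice order of $\mathbf{A}$ is the restriction of the group order. Completeness of $\mathbf{A}$ then says every subset of $[0,u]$ has a supremum in $[0,u]$, and a short check shows this supremum coincides with the one computed in $G$ for subsets bounded above by $u$. Now suppose, toward a contradiction, that $a$ is a nonzero infinitesimal; in the group this reads $na<u$ for all $n\ge 1$, so the family $S=\{na:n\ge 1\}$ is bounded above by $u$. Put $s=\sup S$, which exists by completeness and equals the supremum of $S$ in $G$. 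The translation $x\mapsto x+a$ is an order-automorphism of $G$, hence preserves suprema, and it maps $S$ onto $\{na:n\ge 2\}$, a set with the same supremum $s$. Therefore
\[
s+a=\sup(S+a)=\sup S=s,
\]
and cancellation in the abelian group gives $a=0$, a contradiction. Thus $\mathbf{A}$ has no nonzero infinitesimal, so every $0<a\le u$ admits an $n$ with $na\ge u$.

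This property relative to the strong unit upgrades to full Archimedeanity of $G$: given $0<a,b$ with $b\le mu$, choosing $k$ with $ka\ge u$ yields $(km)a\ge mu\ge b$. By H\"older's theorem $G$ then embeds into $(\mathbb{R},+,\le)$ as an ordered group; rescaling so that $u\mapsto 1$ turns $\Gamma(G,u)=[0,u]$ into a subalgebra of $\Gamma(\mathbb{R},1)=[0,1]_{\mathbf{MV}}$, giving the desired embedding. I expect the delicate point to be the infinitesimal-killing step: one must be careful that the lattice supremum really is a group supremum and that translation preserves it, and it is precisely here that completeness of $\mathbf{A}$ is essential.
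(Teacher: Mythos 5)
Your proof is correct, and it rests on the same underlying idea as the paper's---completeness is incompatible with the existence of infinitesimals, and an MV-chain without infinitesimals embeds into $[0,1]_{\bf MV}$---but it implements both halves with genuinely different tools. The paper disposes of the lemma in two citations to \cite{CDM}: Proposition 3.6.4 (a nontrivial radical of an MV-chain has no infimum, so a complete MV-chain is simple) and Theorem 3.5.1 (simple MV-algebras are, up to isomorphism, subalgebras of $[0,1]_{\bf MV}$). You instead pass to Mundici's correspondence $\mathbf{A}\cong\Gamma(G,u)$ and kill a putative infinitesimal $a$ by hand, using translation-invariance of suprema in the totally ordered group to get $s+a=\sup(S+a)=\sup S=s$ and hence $a=0$; you then finish with H\"older's theorem rather than the simple-algebra representation theorem. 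The two delicate points you flag do check out: the supremum of $S\subseteq[0,u]$ computed in $[0,u]$ agrees with the one computed in $G$ because $G$ is totally ordered (any upper bound $t$ of $S$ in $G$ yields the upper bound $\min(t,u)\in[0,u]$), and $\{na:n\ge 2\}$ has the same upper bounds as $\{na:n\ge 1\}$ since $a\le 2a$. The trade-off is clear: the paper's argument is two lines but opaque without the cited propositions, while yours is self-contained and makes visible exactly where completeness enters.
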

\begin{proof} Let ${\bf A}$ be an MV-chain whose radical $rad({\bf A})$ (intersection of maximal filters) is non trivial,  i.e., $rad({\bf A})\neq \{1\}$. By Proposition 3.6.4 in \cite{CDM} we have that the infimum of $rad({\bf A})$ does not exist. Therefore if ${\bf A}$ is complete, then it is simple and the result of the lemma follows from Theorem 3.5.1 in the above mentioned book.
\end{proof}

\begin{lemma}\label{Luk God P w, b}
	Let ${\bf A}$ be a complete BL-chain, $W\neq\emptyset$, $a \in A$ and $\pi:W \to A$ is such that $\sup_{w \in W}\pi(w)=1$. Then there are $w' \in W$ and $b \in A$ such that $\pi(w')\to b=a$.
\end{lemma}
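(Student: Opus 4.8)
The plan is to dispose of $a=1$ at once and then, for $a<1$, to realise $a$ by the single uniform construction $b:=\pi(w')\ast a$ for a carefully chosen world $w'$. If $a=1$ we may take any $w'$ and $b=1$, since $\pi(w')\to 1=1$ by (\ref{eq:orden}). So assume $a<1$. Everything then reduces to proving the identity
\begin{equation*}
\pi(w')\to(\pi(w')\ast a)=a .
\end{equation*}
The inequality $a\le \pi(w')\to(\pi(w')\ast a)$ is immediate from residuation (\ref{eq:residuation}); the content is the reverse inequality, and the natural tool is divisibility in the form (\ref{eq:prelinearity}), namely $\pi(w')\wedge(\pi(w')\ast a)=\pi(w')\ast\bigl(\pi(w')\to(\pi(w')\ast a)\bigr)$, combined with $\pi(w')\ast a\le\pi(w')$. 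I would prove the identity by locating $\pi(w')$ and $a$ inside the ordinal sum decomposition $\mathbf{A}\cong\bigoplus_{i\in I}\mathbf{A}_i$.

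First I would use $\sup_{w\in W}\pi(w)=1$ to select $w'$. Since $a<1$ cannot be an upper bound of $\{\pi(w)\}_{w\in W}$ (else $1=\sup\le a<1$), there is $w'$ with $\pi(w')>a$, and completeness lets me push $\pi(w')$ as close to $1$ as required. Three configurations then arise. \emph{(i)} If $\pi(w')$ and $a$ lie in different summands, then $a$ sits in a strictly lower summand (or $\pi(w')=1$), so the ordinal-sum rules give directly $\pi(w')\ast a=a$ and $\pi(w')\to a=a$, settling the identity with $b=a$. \emph{(ii)} If both lie in the same summand $\mathbf{A}_{i_0}$ and it is cancellative and divisible, the identity is exactly (\ref{eq:producto}). \emph{(iii)} If $\mathbf{A}_{i_0}$ is of MV type, I would invoke Lemma \ref{lemma:mvcompletas} to embed it into $[0,1]_{\mathbf{MV}}$ and compute in real coordinates, where $x\to(x\ast y)=y$ holds whenever $x+y\ge 1$; hence the identity holds as soon as $\pi(w')\ast a\neq 0$, i.e. $\pi(w')\ge\neg a$.

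The main obstacle is case \emph{(iii)}: the construction works only in the non-saturated regime $\pi(w')\ast a\neq 0$, so I must guarantee a world with $\neg\pi(w')<a$, equivalently $\pi(w')\ge\neg a$. This is precisely where $\sup_{w\in W}\pi(w)=1$ and completeness do the work: for $a>0$ one has $\neg a<1$, so $\neg a$ is not an upper bound of $\{\pi(w)\}$ and some world satisfies $\pi(w')>\neg a$, placing us in the good regime. The delicate point is to interleave this with the component analysis. I would organise it by asking whether some $\pi(w)$ lies strictly above the summand of $a$: if so, case \emph{(i)} applies; if not, all the large values of $\pi$ must accumulate at the top of $a$'s summand, forcing that summand to be the topmost one with top equal to $1$, so that cases \emph{(ii)}/\emph{(iii)} can be run with $\pi(w')$ arbitrarily close to $1$ and the needed bound $\pi(w')\ge\neg a$ secured.
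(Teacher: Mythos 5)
Your overall route is the same as the paper's: decompose the complete BL-chain as an ordinal sum, dispose of the case where some $\pi(w')$ lies in a summand strictly above that of $a$ (or equals $1$) via the ordinal-sum rules, use cancellativity (\ref{eq:producto}) when the relevant summand is cancellative, and compute inside $[0,1]_{\bf MV}$ via Lemma \ref{lemma:mvcompletas} when it is of MV type. Your uniform witness $b=\pi(w')\ast a$ is essentially the paper's witness ($a+\pi(w')-1$ equals $\pi(w')\ast a$ when it is nonnegative), and you are in fact more careful than the paper about when the {\L}ukasiewicz computation is legitimate: the paper only requires $\pi(w')>a$, which does not guarantee $a+\pi(w')-1\geq 0$.

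There is nevertheless a genuine gap, and it sits exactly where you stop. First, the threshold you need is the negation computed \emph{inside} the summand $\mathbf{A}_{i_0}$ (relative to its local bottom, under the embedding of Lemma \ref{lemma:mvcompletas}), not the global $\neg a=a\to 0$ of $\mathbf{A}$; when $\mathbf{A}_{i_0}$ is not the bottom summand these differ, and the global condition $\pi(w')\geq\neg a$ can hold vacuously while the identity $\pi(w')\to(\pi(w')\ast a)=a$ still fails. Second, when $a$ \emph{is} the local bottom of a terminal MV summand the required inequality degenerates to $\pi(w')=1$, which $\sup_{w\in W}\pi(w)=1$ does not deliver, and no other choice of $b$ rescues the claim: for $\mathbf{A}=[0,1]_{\bf MV}$, $W=\mathbb{N}$, $\pi(n)=1-2^{-n}$ and $a=0$ one has $\pi(n)\to b\geq \neg\pi(n)=2^{-n}>0$ for every $b$, so no pair $(w',b)$ exists at all. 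Hence this corner cannot be closed by any argument --- the lemma as stated is false there, and the paper's own proof hides the same hole in the unjustified step $b=a+\pi(w')-1\in[0,1]$. If the statement is repaired (e.g.\ by assuming the supremum is attained, or by excluding the local bottom of a terminal MV summand), your argument, read with the local negation, goes through.
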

\begin{proof}
If ${\bf A}$ is a complete BL-chain, following Corollary 3.2.9 in \cite{BuMon} ${\bf A}$ is isomorphic to an ordinal sum indexed by a totally ordered set $I$ of structures $\{{\bf A}_i: i \in I\}$ each of which is isomorphic to either a complete MV-chain or a cancellative residuated lattice (i.e., a residuated lattice satisfying equation (\ref{eq:producto})). We write ${\bf A}\cong \bigoplus_{i\in I} {\bf A}_i.$

\medskip

Consider $a\in A$ and $i\in I$ such that $a\in A_i.$ If $a=1$ then  take $b=\pi(w')$ for any $w' \in W$. Then we have  $\pi(w')\to b=a$. For the case $a<1$ we have two possible  situations:
\begin{itemize}
\item
there is $j\in I$ and $w'\in W$ such that $j>i$ and $\pi(w')\in A_j.$ In this case, from the definition of $\to$ in the ordinal sum, take $b=a$ and we have that $\pi(w')\to a=a$
\item
$i$ is the maximum element of $I$. This being the case, since $\sup_{w \in W}\pi(w)=1$ there is $W'\subseteq W$ such that $\pi(W')\subseteq A_i$ and $\sup_{w \in W'}\pi(w)=1.$ If ${\bf A}_i$ is an MV-chain, from  Lemma \ref{lemma:mvcompletas} we have that it is isomorphic to a subalgebra of the simple chain $[0,1]_{\bf MV}.$ Take $w' \in W'$ such that $\pi(w') > a$. Then if we consider $b=a + \pi(w') -1$, we have
		\begin{align*}
		\pi(w')\to b&=\min\{1,1 - \pi(w') + b\}\\
		           &=1-\pi(w')+b\\
		           &=1-\pi(w')+a+\pi(w')-1=a.
		\end{align*}
If ${\bf A}_i$ is not an MV-chain, then it is cancellative, i.e., it satisfies equation (\ref{eq:producto}). Then taking $w'\in W'$ and $b=\pi(w')\ast a$ we get $\pi(w') \to b=\pi(w')\to \pi(w')\ast a=a$. \end{itemize}
\end{proof}

\begin{theorem}
	For each $a \in A$, the constant map $f_a \in A^W$, given by $f_a(w)=a$  for all $w \in W$  belongs to  $\forall^{\mathcal{P}}{\bf A}^W$, i.e., $\forall^{\mathcal{P}}{\bf A}^W=A^*$, where $A^*$ is the set of all constant maps in $A^W$.
\end{theorem}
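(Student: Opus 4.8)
The plan is to prove the two inclusions separately. The inclusion $\forall^{\mathcal{P}}{\bf A}^W \subseteq A^*$ has already been observed in the paragraph preceding the statement: by definition (\ref{poss-epis1}) the value $\forall^{\mathcal{P}}(g)$ does not depend on the argument $w$, so every element of $\forall^{\mathcal{P}}{\bf A}^W$ is a constant map. Hence the whole content of the theorem lies in the reverse inclusion $A^* \subseteq \forall^{\mathcal{P}}{\bf A}^W$, that is, in exhibiting, for each $a \in A$, a function $g \in A^W$ with $\forall^{\mathcal{P}}(g) = f_a$.

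First I would fix $a \in A$ and invoke Lemma \ref{Luk God P w, b}, which provides $w' \in W$ and $b \in A$ such that $\pi(w') \to b = a$. This is precisely the witness needed to force the infimum in (\ref{poss-epis1}) to take the value $a$ at one coordinate. Accordingly, I would define $g \in A^W$ by $g(w') = b$ and $g(w) = 1$ for every $w \neq w'$.

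Next I would compute $\forall^{\mathcal{P}}(g) = \inf_{w \in W}\{\pi(w) \to g(w)\}$. For the distinguished world $w'$ the corresponding term is $\pi(w') \to b = a$, whereas for every other $w$ one has $\pi(w) \to g(w) = \pi(w) \to 1 = 1$. Since $a \leq 1$, the family $\{\pi(w)\to g(w)\}_{w\in W}$ consists of the single value $a$ together with copies of $1$, so its infimum is $a$. Therefore $\forall^{\mathcal{P}}(g)$ is the constant map $f_a$, which proves $f_a \in \forall^{\mathcal{P}}{\bf A}^W$ and completes the inclusion $A^* \subseteq \forall^{\mathcal{P}}{\bf A}^W$.

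I expect no genuine obstacle at this final stage: once Lemma \ref{Luk God P w, b} is available the construction is immediate, and the only point to verify is that the infimum really collapses to $a$, which holds because $g$ equals the top element $1$ off the single coordinate $w'$. The substantive difficulty is entirely concentrated in Lemma \ref{Luk God P w, b}, whose proof rests on the ordinal-sum decomposition of complete BL-chains and on the identification of complete MV-chains with subalgebras of $[0,1]_{\bf MV}$ from Lemma \ref{lemma:mvcompletas}; that is where the hypothesis that ${\bf A}$ is a BL-\emph{chain} is actually used.
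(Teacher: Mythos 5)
Your proposal is correct and follows exactly the paper's argument: the same appeal to Lemma \ref{Luk God P w, b} for the witness $w'$ and $b$ with $\pi(w')\to b=a$, the same function $g$ equal to $b$ at $w'$ and $1$ elsewhere, and the same collapse of the infimum to $a$. Your explicit verification that the off-$w'$ terms equal $1$ is a slight elaboration the paper leaves implicit, but the route is identical.
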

\begin{proof}
	Let $a \in A$ and $f_a \in A^W,$ given by $f_a(w)=a$ for all $w \in W$. By the previous Lemma, exist $w' \in W$ and $b \in A$ such that $\pi(w')\to b=a$. Take the function $g \in A^W$ given by $$g(w)=\begin{cases} b & \mbox{si } w=w'\\
	1 & \mbox{si } w\neq w'.
	\end{cases}$$
	Therefore $\forall^{\mathcal{P}}g=\inf_{w \in W}\{\pi(w)\to g(w) \}=f_a$ and $f_a \in \forall^{\mathcal{P}}{\bf A}^W$ as desired.
\end{proof}

\begin{theorem}\label{possibilistic frame associated}
	Let ${\bf A}$ be a complete BL-chain, $W \neq \emptyset$ and $A^W$ the set of functions from $W$ into $A$. If $\mathcal{A}=\langle A^W, \forall, \exists \rangle$ is a c-EBL-algebra such that $\sup_{w \in W}c(w)=1$ and $\forall A^W=A^*$,  then $\mathcal{P}^{\mathcal{A}}=\langle W, c \rangle $ is a possibilistic frame. Moreover, the complex c-EBL-algebra associated to $\mathcal{P}^{\mathcal{A}}$ satisfies $$\forall^{\mathcal{P}^{\mathcal{A}}} = \forall \hspace{2cm} \exists^{\mathcal{P}^{\mathcal{A}}} = \exists. $$
\end{theorem}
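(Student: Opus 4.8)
The plan is to establish the two claims separately: the frame claim is almost immediate from the hypotheses, while the recovery of the operators rests on comparing the two available descriptions of $\forall$ and $\exists$ — the intrinsic one coming from the focal element (Theorem \ref{c-teorema}) and the frame-semantic one given by (\ref{poss-epis1}) and (\ref{poss-epis2}).

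First I would verify that $\mathcal{P}^{\mathcal{A}}=\langle W,c\rangle$ is a $\Pi{\bf A}$-frame. Since ${\bf A}$ is a complete BL-chain, $W\neq\emptyset$, and $c\in A^W$ is the focal element of $\mathcal{A}$ with $\sup_{w\in W}c(w)=1$ by hypothesis, $c$ is exactly a normalized possibility distribution over $W$, so $\langle W,c\rangle$ is a possibilistic frame. By Theorem \ref{complex}, its associated complex algebra $\langle {\bf A}^{W},\forall^{\mathcal{P}^{\mathcal{A}}},\exists^{\mathcal{P}^{\mathcal{A}}}\rangle$ is then a c-EBL-algebra whose operators are given pointwise by $\forall^{\mathcal{P}^{\mathcal{A}}}(f)=\inf_{w}\{c(w)\to f(w)\}$ and $\exists^{\mathcal{P}^{\mathcal{A}}}(f)=\sup_{w}\{c(w)\ast f(w)\}$, both returning constant maps.

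Next I would apply Theorem \ref{c-teorema} to $\mathcal{A}$ itself. As $\mathcal{A}$ is a c-EBL-algebra with focal element $c$ and, by hypothesis, $\forall A^W=\exists A^W=A^*$, that theorem yields $\forall f=\max\{g\in A^*:g\leq c\to f\}$ and $\exists f=\min\{g\in A^*:c\ast f\leq g\}$ for every $f\in A^W$. The key computation is to evaluate these extrema through the order isomorphism $a\mapsto f_a$ from ${\bf A}$ onto $A^*$, using that the order and the operations $\to,\ast$ on $A^W$ are pointwise. A constant map $f_a$ satisfies $f_a\leq c\to f$ iff $a\leq c(w)\to f(w)$ for all $w$, i.e. iff $a\leq\inf_{w}\{c(w)\to f(w)\}$; writing $\forall f=f_{a_0}$ (possible since $\forall f\in A^*$) and noting that $f_{\inf_{w}\{c(w)\to f(w)\}}$ itself lies in the set, maximality forces $a_0=\inf_{w}\{c(w)\to f(w)\}=\forall^{\mathcal{P}^{\mathcal{A}}}(f)$. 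Dually, $c\ast f\leq f_a$ iff $\sup_{w}\{c(w)\ast f(w)\}\leq a$, so the least such constant map has value $\sup_{w}\{c(w)\ast f(w)\}=\exists^{\mathcal{P}^{\mathcal{A}}}(f)$, giving $\exists f=\exists^{\mathcal{P}^{\mathcal{A}}}(f)$.

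The individual verifications are routine once the two descriptions are lined up; the main obstacle — and the only step needing genuine care — is the passage from an extremum taken over constant maps to a pointwise infimum or supremum over $A$. This is exactly where the three hypotheses are used: completeness of ${\bf A}$ guarantees that the required $\inf$ and $\sup$ exist in $A$, the assumption $\forall A^W=A^*$ guarantees that the extrema of Theorem \ref{c-teorema} are realized by constant maps, and the pointwise structure of $A^W$ turns the set of constant maps dominated by $c\to f$ into a principal downset of $A$ whose supremum is attained. Combining the two descriptions yields $\forall^{\mathcal{P}^{\mathcal{A}}}=\forall$ and $\exists^{\mathcal{P}^{\mathcal{A}}}=\exists$, which completes the proof.
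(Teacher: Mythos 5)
Your proposal is correct and follows essentially the same route as the paper: both establish the frame claim directly from the hypotheses on $c$, then compare the intrinsic description of $\forall f$ and $\exists f$ from Theorem \ref{c-teorema} (as a max/min over $\forall A^W = A^*$) with the pointwise $\inf$/$\sup$ formulas of the complex algebra, using that constant maps correspond to elements of $A$ and that the pointwise order makes the relevant extremum equal to $\inf_w\{c(w)\to f(w)\}$ (resp. $\sup_w\{c(w)\ast f(w)\}$). The paper phrases this as two inequalities in each direction rather than via the order isomorphism $a\mapsto f_a$, but the content is identical.
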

\begin{proof}
	Observe that since $W \neq \emptyset$ and $\sup_{w \in W}c(w)=1$, then $\mathcal{P}^{\mathcal{A}}=\langle W, c \rangle $ is a possibilistic frame. In this sense, let  $\langle{\bf A}^{W}, \forall ^{\mathcal{P}^{\mathcal{A}}}, \exists ^{\mathcal{P}^{\mathcal{A}}}\rangle$ be the complex EBL-algebra associated with $\mathcal{P}^{\mathcal{A}}=\langle W,c \rangle$ given by the Theorem \ref{complex}, that is, for every $f \in A^W$:
	$$\forall^{\mathcal{P}^{\mathcal{A}}} f(w)=\inf_{w \in W}\{c(w) \to f(w)\},$$
	$$\exists^{\mathcal{P}^{\mathcal{A}}} f(w)=\sup_{w \in W}\{c(w) \ast f(w)\}.$$
	
	We will show that these operators coincide with $\forall$ and $\exists$, i.e., for every $f \in A^W$ and for every $w' \in W$: $\forall^{\mathcal{P}^{\mathcal{A}}} f(w') = \forall f(w')$ and $\exists^{\mathcal{P}^{\mathcal{A}}} f(w')=\exists f(w')$.\\
	Let $f \in A^W$. From Theorem \ref{c-teorema}, we know that
	\begin{equation}\label{eq forall }
	\forall f=\max\{g \in \forall A^W: g \leq c \to f\},
	\end{equation}
	\begin{equation}\label{eq exists}
	\exists f=\min\{g \in \forall A^W: c \ast f \leq g \}.
	\end{equation}
	Note that since $\forall A^W = A^*$, then $\forall f$ and $\exists f$ are constants maps, so we will write $\forall f$ instead of $\forall f (w)$ for any $w \in W$.\\
	First, by (\ref{eq forall }), $\forall f \leq c \to f$, that is, $\forall f \leq c(w) \to f(w)$, for all $w \in W$. Consequently, $\forall f$ is a lower bound of $\{c(w) \to f(w), w \in W\}$ and therefore, $\forall f \leq \forall^{\mathcal{P}^{\mathcal{A}}} f$.\\
	On the other hand, by definition of $\forall^{\mathcal{P}^\mathcal{A}}$ we have
	 $$\forall^{\mathcal{P}^\mathcal{A}}(f) \leq c(w)\to f(w)$$ for every $w \in W$
	, i.e., $\forall^{\mathcal{P}^\mathcal{A}} f \leq c \to f$. Now, since $\forall^{\mathcal{P}^\mathcal{A}}$ is a constant map, $\forall^{\mathcal{P}^\mathcal{A}} \in \forall A^W$. Hence, by (\ref{eq forall }), we conclude $\forall^{\mathcal{P}^\mathcal{A}} f \leq \forall f$.	In analogous way, we have the result for $\exists^{\mathcal{P}^{\mathcal{A}}} f=\exists f$ using (\ref{eq exists}).
\end{proof}

\begin{remark} It is worth to observe that the hypotheses of the Theorem \ref{possibilistic frame associated} are necessary. In details, we have:
  \begin{itemize}
  	\item There are c-EBL-algebras of the form ${\bf A}^X$ with $X\neq \emptyset$ such that $\sup_{x \in X}c (x) \neq 1$. For example,  consider ${\bf A}$ as the finite MV-chain $\mbox{\L}_4$ and $X=\mathbb{N}$. We define for every $f \in A^X$, $$\forall f(n)=\begin{cases}
  	1&\mbox{if } f(n)\geq \frac{2}{3}\mbox{ for all } n\in\mathbb{N},\\
  	0&\mbox{otherwise.} \end{cases}$$ $$ \exists f(n)=\begin{cases}
1&\mbox{if } f(n)\geq \frac{2}{3}\mbox{ for some } n\in\mathbb{N},\\
0&\mbox{otherwise.} \end{cases}$$
 The reader can easily corroborate that the resulting structure $\langle A^\mathbb{N}, \forall, \exists \rangle$ is a c-EBL-algebra with focal element $c:\mathbb{N} \to A$ such that $c(n)=\frac{2}{3}$ for every $n \in \mathbb{N}$. Clearly,  $\sup_{n \in \mathbb{N}}c(n) \neq 1$. Note also that $\forall A^\mathbb{N} =\{0,1\}$, i.e., $\forall A^\mathbb{N} \subsetneq A^*$. \smallskip
  	\item There exist c-EBL-algebras  of the form $A^X$ where $\sup_{x \in X} c(x)=1$, but $\forall A^X \neq A^*$.  For this case, first consider the epistemic estructure ${\bf A}$ over $\mbox{\L}_4$  given by  $\forall a=1$ if $a=1$ and $\forall a=0$  if not, while $\exists a=0$ if $a=0$ and $\exists a=1$ otherwise.
   Consider the product algebra ${\bf A}^{\mathbb{N}}$, i.e., for each $f \in A^\mathbb{N}$ $$(\bar{\forall} f)(n)= \forall(f(n))  \hspace{0.5cm}(\bar{\exists} f)(n)= \exists(f(n)).$$
  Clearly, since ${\bf A}$ is a c-EBL-algebra, ($c=1$) then  $\langle A^\mathbb{N}, \bar{\forall}, \bar{\exists} \rangle$ is also a $\bar{c}$-EBL-algebra with $\bar{c}=c(n)$. We have that $\sup_{n \in \mathbb{N}}c(n) = 1$, however the quantifiers are not constant maps, therefore $\forall A^X \neq A^*$.
  \end{itemize}

\end{remark}

\section{Conclusion and future work}

The motivation of our paper is to present an algebraic characterization of a system of fuzzy epistemic logic that extends the classical $KD45$ and it is based on H\'{a}jek's fuzzy system basic logic.
To achieve our aim, we have introduced Epistemic BL-algebras, as BL-algebras with two unary operators that behave generalizing the modal operators of $KD45$. We have studied some of their logical and algebraic properties and we have shown their relationship with Monadic BL-algebras, which turn to be the algebraic counterpart of the fuzzy version of $S5$. The results of Section \ref{sec:2} suggest that our definition of EBL-algebras is on firm ground: we have shown that EBL-algebras whose BL-reduct are Boolean algebras coincide with the algebraic correspondent of classical $KD45$ (Pseudomonadic Algebras) and that the ones whose BL-reduct is a G\"{o}del algebra are equivalent to serial transitive and euclidean bi-modal G\"{o}del algebras, the algebraic correspondent to the G\"{o}del generalization of $KD45$.
To close the ideas of the paper, after investigating c-EBL-algebras and complex EBL-algebras, we have proved that the algebras of functions in the fuzzy possibilistic Kripke frames defined by \cite{HajekBook98} are EBL-algebras.

Though the reported results go in the directions of our goal there is still a lot to investigate. We plan to continue our research focusing on the study of the subvariety of Epistemic MV-algebras, that is, EBL-algebras whose BL-reduct is an MV-algebra. As far as we know there are no previous result in this direction. Our goal will then be to establish a connection between this new EMV-algebras and the modal logic given by a finite MV-chain studied by \cite{BoEsGoRo11}.

There is also a problem that needs to be solved in the future and it is to establish if both, our algebraic semantics and the Kripke-style semantics presented by \cite{HajekBook98} correspond to the same axiomatic system.

We think that EBL-algebras pose many new challenges that have to be faced.

\section*{Compliance with Ethical Standards}
Funding: The results of this paper are framed
in the following research project: PIP 112-20150100412CO, CONICET, \textit{Desarrollo de Herramientas Algebraicas y Topol\'ogicas para el Estudio de L\'ogicas de la Incertidumbre y la Vaguedad. DHATELIV.} Penelope Cordero was supported by a CONICET grant during the preparation of the paper.

Conflict of Interest: All authors declare that they have
no conflict of interest.

Ethical approval: This article does not contain any
studies with human participants or animals performed
by any of the authors.


\begin{thebibliography}{}
\bibitem[Bezhanishvili(2002)]{Bez2002}Bezhanishvili N (2002) Pseudomonadic Algebras as Algebraic Models of Doxastic Modal Logic. Mathematical Logic Quartely 48:624-636.



\bibitem[Bou et al.(2015)]{Bouetal2015} Bou F, Esteva F, Godo L (2015) On Possibilistic Modal Logics Defined Over MTL-Chains. In: Montagna F (ed) Petr H\'{a}jek on Mathematical Fuzzy Logic. Outstanding Contributions to Logic, vol 6. Springer, Cham pp. 225-244. 

\bibitem[Bou et al.(2011)]{BoEsGoRo11}Bou F, Esteva F, Godo L, Rodriguez R (2011) On the Minimum Many-Valued Modal Logic over a Finite Residuated Lattice. Journal of Logic and Computation 21:739-790.
	
\bibitem[Busaniche et al.(2011)]{BuMon} Busaniche M, Montagna F (2011) {H}\'{a}jek's logic {BL} and {BL}-algebras. In: Handbook of Mathematical Fuzzy Logic, vol.~1 of	Studies in Logic, Mathematical Logic and Foundations, College Publications, London, pp. 355-447. 
	
\bibitem[Caicedo and Rodriguez(2015)]{CaiRod2015}Caicedo X, Rodriguez R (2015) Bi-modal G\"{o}del logic over [0,1]-valued Kripke frames. Journal of Logic and Computation 25:37-55.
	
	

\bibitem[Casta\~no et al.(2017)]{CCDR2017} Casta\~no D, Cimadamore C, D\'iaz Varela P, Rueda L (2017) Monadic BL-algebras: the equivalent algebraic semantics of H\'{a}jek's monadic fuzzy logic. Fuzzy Sets and Systems 320:40-59.

\bibitem[Cignoli et al.(2000)]{CDM} Cignoli R, D'Ottaviano I, Mundici D (2000) Algebraic Foundations of many-valued Reasoning. Trends in Logic, Vol. 7, Kluwer Academic Publishers, Dordrecht.


\bibitem[Cignoli and Torrens(2000)]{CT} Cignoli R, Torrens A (2000) An algebraic analysis of product logic. Multiple Valued Logic 5:45-65.

\bibitem[Dubois et al.(1994)]{Duboisetal2} Dubois D, Land J, Prade H (1994) Possibilistic Logic. In Gabbay et al. (eds.) Handbook of Logic in Artificial Intelligence and Logic Programing, Non-monotonic Reasoning and Uncertain Reasoning. Vol.3 Oxford UP. 439-513.

\bibitem[Dubois and Prade(2004)]{Duboisetal1} Dubois D, Prade H (2004) Possibilistic Logic: a retrospective and prospective view. Fuzzy Sets and Systems 144:3-23.

\bibitem[Fitting(1991)]{Fitting1} Fitting M (1991) Many valued modal logics. Fundamenta Informaticae 15:254-325.

\bibitem[Fitting(1992)]{Fitting2} Fitting M (1992) Many valued modal logics II. Fundamenta Informaticae 17:55-73.

\bibitem[Galatos et al.(2007)]{GJKO} Galatos N, Jipsen P, Kowalski T, Ono H (2007) Residuated lattices: an algebraic glimpse at substructural logics, vol. 151 of Studies in logic and the foundation of mathematics. Elsevier, Amsterdam.

\bibitem[H\'{a}jek et al.(1995)]{Hajeketal1995} H\'{a}jek P, Harmancov\'a D, Verbrugge R (1995) A Qualitative Fuzzy Probabilistic Logic. Journal of Approximate Reasoning 12:1-19.
	
\bibitem[H\'{a}jek(1998)]{HajekBook98} H\'{a}jek P (1998) Metamathematics of Fuzzy Logic. Trends in Logic, Kluwer.
	
\bibitem[H\'{a}jek(2010)]{Hajek2010} H\'{a}jek P (2010) On fuzzy modal logics $S5({\cal C})$. Fuzzy Sets and Systems 161:2389-2396.




\end{thebibliography}
\end{document}